\documentclass[12pt,letterpaper]{article}

\usepackage{amssymb,amsfonts,amscd,amsthm}
\usepackage[all,arc]{xy}
\usepackage{enumerate, bbm}
\usepackage{mathrsfs}
\usepackage{tikz}
\usepackage[margin=1.2in]{geometry} 
\usepackage{mathtools}
\usepackage{hyperref}
\usepackage{color}
\usepackage{graphicx}
\usepackage{enumitem} 
\graphicspath{ {TexImages/} }

\newtheorem{thm}{Theorem}[section]
\newtheorem{cor}[thm]{Corollary}

\newtheorem{lem}[thm]{Lemma}
\newtheorem{claim}[thm]{Claim}

\newtheorem{definition}[thm]{Definition}

\hypersetup{
	colorlinks,
	citecolor=blue,
	filecolor=blue,
	linkcolor=blue,
	urlcolor=blue,
	linktocpage
}

\setenumerate[1]{label=\thesection.\arabic*.} 
\setenumerate[2]{label*=\arabic*.} 
\setlist[enumerate]{itemsep=2ex, topsep=2ex} 
\setlist[itemize]{itemsep=2ex, topsep=2ex}


\newcommand{\E}{\mathbb{E}}

\newcommand{\al}{\alpha}

\newcommand{\ep}{\varepsilon}

\newcommand{\del}{\delta}
\newcommand{\Del}{\Delta}

\renewcommand{\l}{\left}
\renewcommand{\r}{\right}

\newcommand{\half}{\frac{1}{2}}

\newcommand{\sm}{\setminus}
\newcommand{\sub}{\subseteq}


\newcommand{\rec}[1]{\frac{1}{#1}}
\newcommand{\f}[2]{\frac{#1}{#2}}

\newcommand{\ceil}[1]{\l\lceil #1\r\rceil}

\newcommand{\mr}[1]{\mathrm{#1}}

\parskip=10pt
\parindent=0pt

\newcommand{\ex}{\mr{ex}}
\newcommand{\Tr}[2]{\mr{Tr}_{#2}(#1)}
\newcommand{\Ber}[2]{\mr{B}_{#2}(#1)}

\title{Forbidding $K_{2,t}$ traces in triple systems}
\author{
{{Ruth Luo}}\thanks{University of Califonia, San Diego, La Jolla, CA 92093, USA. E-mail: {\tt ruluo@ucsd.edu}.
Research is supported in part by NSF grants  DMS-1600592 and DMS-1902808.
}
\and{{Sam Spiro}}\thanks{University of Califonia, San Diego, La Jolla, CA 92093, USA. E-mail: {\tt sspiro@ucsd.edu}.
Research is supported in part by NSF grants DGE-1650112.}}
\date{\today}
\begin{document}
	\maketitle
	\begin{abstract}
	    Let $H$ and $F$ be hypergraphs. We say $H$ {\em contains $F$ as a trace} if there exists some set $S \subseteq V(H)$ such that $H|_S:=\{E\cap S: E \in E(H)\}$ contains a subhypergraph isomorphic to $F$. In this paper we give an upper bound on the number of edges in a $3$-uniform hypergraph that does not contain $K_{2,t}$ as a trace when $t$ is large. In particular, we show that\[ \lim_{t\to \infty}\lim_{n\to \infty} \frac{\mathrm{ex}(n, \mathrm{Tr}_3(K_{2,t}))}{t^{3/2}n^{3/2}} = \frac{1}{6}.\]
	    Moreover, we show $\frac{1}{2} n^{3/2} + o(n^{3/2}) \leq \mathrm{ex}(n, \mathrm{Tr}_3(C_4)) \leq \frac{5}{6} n^{3/2} + o(n^{3/2})$.  
	    	\end{abstract}
\section{Introduction}

A {\em hypergraph} $H$ is a family of subsets of some fixed ground set. The subsets are called the {\em edges} of $H$ and the ground set is called the {\em vertex set} of $H$. We denote these sets by $E(H)$ and $V(H)$ respectively. If each edge of $H$ contains exactly $r$ elements, then we say that $H$ is {\em $r$-uniform}.  

A cornerstone of extremal combinatorics is the {\em Tur\'an problem}. Broadly speaking, the Tur\'an problem asks to determine the maximum number of edges in a hypergraph which contains no subhypergraphs isomorphic to a member of some given forbidden family. In this paper, we study uniform hypergraphs with forbidden traces.

\begin{definition}Let $F$ and $T$ be uniform hypergraphs (possibly with different uniformities) with $V(F) \subseteq V(T)$. We say that $T$ is a {\bf trace of $F$ on $V(F)$}, or simply an $F$-trace, if there exists a bijection $\phi: E(F) \to E(T)$ such that for every edge $e \in E(F)$, $ \phi(e) \cap V(F) = e$. We say a hypergraph $H$ {\bf contains $F$ as a trace} if it contains a subhypergraph isomorphic to a trace of $F$.
\end{definition}
Equivalently, $H$ containing $F$ as a trace means that there exists some set $S$ of vertices (corresponding to $V(F)$) such that $H|_S:= \{E \cap S: E \in E(H)\}$ has a subhypergraph isormorphic to $F$.  We note that in different contexts, traces are also called configurations \cite{SS} and induced Berge $F$'s~\cite{FL}.

For $r \geq 2$, let $\Tr{F}{r}$ denote the set of all $r$-uniform hypergraphs that are traces of $F$ up to isomorphism. If $\mathcal F$ is a family of $r$-uniform hypergraphs, then the function $\ex(n, \mathcal F)$ denotes the maximum number of edges in an $n$-vertex, $r$-uniform hypergraph with no subhypergraph isomorphic to a member of $\mathcal F$. In particular, $\ex(n, \Tr{F}{r})$ is the maximum size of a hypergraph that does not contain $F$ as a trace.

Forbidding traces in hypergraphs is closely related to the well known {\em Berge Tur\'an problem}.

\begin{definition}Given hypergraphs $F$ and $T$, we say $T$ is a {\em Berge $F$} if there exists a bijection $\phi: E(F) \to E(T)$ such that for every edge $e \in E(F)$, $e \subseteq \phi(e)$.
\end{definition}

\begin{figure}
    \centering
    \includegraphics[scale=.15]{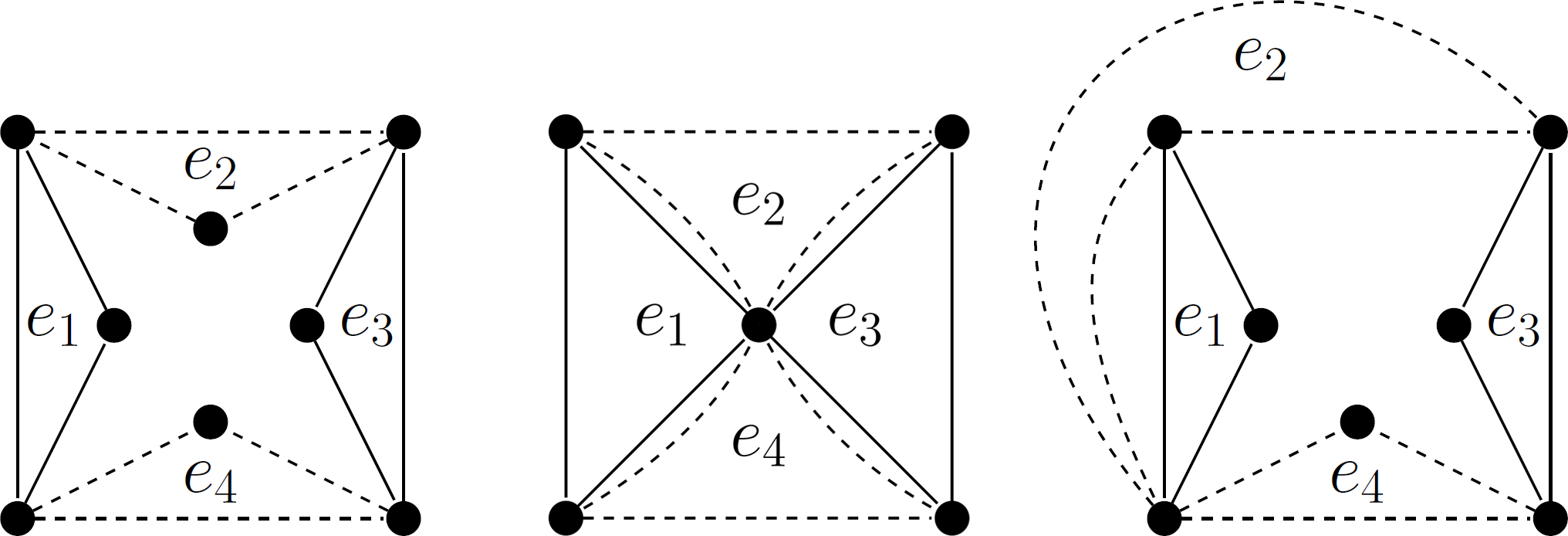} 
    \caption{Two examples of $C_4$ traces and a Berge $C_4$ that is not a trace.}
\end{figure}

Let $\Ber{F}{r}$ denote the set of all $r$-uniform hypergraphs that are Berge $F$'s up to isomorphism. Observe that $\Tr{F}{r} \subseteq \Ber{F}{r}.$ Consequently, 
\begin{equation}\label{bergebound}\ex(n, \Ber{F}{r}) \leq ex(n, \Tr{F}{r}).
\end{equation}

In this paper, we focus only on the case where $F$ is a graph, particularly $F = K_{2,t}$.
\subsection{Known extremal results for degenerate graphs}
Generalizing a result of Mantel~\cite{M}, Tur\'an~\cite{turan} determined $ex(n, K_t)$, the maximum number of edges in an $n$-vertex graph without a copy of $K_t$, for all $t$. Later results by Erd\H{o}s, Stone, and Simonovits~\cite{simonovits, stone} established the asymptotic value of $\ex(n, F)$ for any graph $F$ which is nonbipartite.

Determining $\ex(n,F)$ when $F$ is bipartite is a main area of research in extremal graph theory.  The case $F = K_{2,t}$ is of particular interest to this paper, especially for $F = K_{2,2} = C_4$.  It is known that $
\ex(n, C_4) = \frac{1}{2}n^{3/2}+o(n^{3/2})$, due to
K\"ovari, S\'os, Tur\'an~\cite{KST}; Brown~\cite{brown}; and Erd\H{o}s, R\'enyi, S\'os~\cite{ERS}. These results were further strengthened by F\"uredi~\cite{furedi} to $K_{2,t}$.
\begin{thm}[F\"uredi~\cite{furedi}]\label{thm:graphK2t}For $t \geq 2$, \[\ex(n, K_{2,t}) = \frac{\sqrt{t-1}}{2} n^{3/2} + O(n^{4/3}).\]
\end{thm}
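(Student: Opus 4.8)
The statement has matching upper and lower bounds, which I would prove separately; essentially all of the difficulty is in the lower bound construction.

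For the upper bound, let $G$ be an $n$-vertex $K_{2,t}$-free graph with $m$ edges and degrees $d_1,\dots,d_n$. The plan is to count cherries (paths on three vertices) by their central vertex, giving exactly $\sum_v \binom{d_v}{2}$ of them. Because $G$ has no $K_{2,t}$, any pair of vertices is the set of endpoints of at most $t-1$ cherries, so $\sum_v \binom{d_v}{2} \le (t-1)\binom{n}{2}$. Expanding this and bounding $\sum_v d_v^2 \ge (2m)^2/n$ by Cauchy--Schwarz produces the quadratic inequality $4m^2 - 2mn - (t-1)n^2(n-1) \le 0$; solving for $m$ and expanding the square root gives $m \le \frac{\sqrt{t-1}}{2} n^{3/2} + O(n)$, which already beats the claimed error term. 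So the upper bound is routine.

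The lower bound is the heart of the matter: I must exhibit, for a dense set of $n$, a $K_{2,t}$-free graph with $\frac{\sqrt{t-1}}{2}n^{3/2} - o(n^{3/2})$ edges. I would use an algebraic construction over a finite field. Fix a prime power $q$ with $t-1 \mid q-1$, let $H \le \F_q^*$ be the subgroup of order $t-1$, and fix a nondegenerate symmetric bilinear form $\langle\cdot,\cdot\rangle$ on $\F_q^2$. Take the vertex set to be the orbits $[x]$ of $\F_q^2 \setminus \{0\}$ under scalar multiplication by $H$, and join $[x]$ to $[y]$ exactly when $\langle x,y\rangle \in H$; since $H$ is a group this relation is well defined on orbits. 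Then there are $(q^2-1)/(t-1) \approx q^2/(t-1)$ vertices, each of degree $\approx q$, and the number of common neighbors of two independent classes $[x],[y]$ equals the number of $H$-orbits of solutions $z$ to $\langle x,z\rangle \in H$ and $\langle y,z\rangle \in H$, which is exactly $t-1$. Hence the graph is $K_{2,t}$-free, and with $n \approx q^2/(t-1)$ its edge count is $\frac{1}{2} n \cdot q \approx \frac{\sqrt{t-1}}{2} n^{3/2}$.

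The main obstacle is twofold. First, one must pin down the construction's exact parameters: handling the classes $[x]$ with $\langle x,x\rangle \in H$ (self-adjacencies that must be deleted) and the linearly dependent pairs, all of which contribute only lower-order corrections but must be controlled. Second, and more seriously, one must pass from prime powers $q$ with $t-1 \mid q-1$ to \emph{all} $n$: given $n$, I would select the largest admissible $q$ with $(q^2-1)/(t-1) \le n$ and pad with isolated vertices. The resulting deficit is $O(n^{1/2})$ times the gap $n - (q^2-1)/(t-1)$, and bounding this by $O(n^{4/3})$ requires that consecutive admissible prime powers differ by $O(q^{2/3})$, which follows from known results on gaps between primes in the arithmetic progression $1 \pmod{t-1}$. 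Assembling the error from this interpolation with the $O(n)$ upper-bound error yields the stated $O(n^{4/3})$.
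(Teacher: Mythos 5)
This theorem is quoted from F\"uredi~\cite{furedi} and is not proved anywhere in the present paper, so there is no internal argument to compare you against; I can only assess your proposal on its own terms, and on those terms it is essentially a reconstruction of the cited proof. Your upper bound is the standard K\H{o}v\'ari--S\'os--Tur\'an double count of cherries: $\sum_v \binom{d_v}{2} \le (t-1)\binom{n}{2}$ plus convexity does give $4m^2 - 2mn - (t-1)n^2(n-1) \le 0$ and hence $m \le \frac{\sqrt{t-1}}{2}n^{3/2} + O(n)$; that half is complete and correct. Your lower bound is exactly F\"uredi's construction (the orbit graph of $\F_q^2\setminus\{0\}$ under a multiplicative subgroup $H$ of order $t-1$, with adjacency $\langle x,y\rangle \in H$), so you have found the right route rather than an alternative one.

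Two points in the lower bound deserve more care than your sketch gives them. First, the common-neighbourhood count: for linearly independent $x,y$ the solution set $\{z : \langle x,z\rangle \in H,\ \langle y,z\rangle \in H\}$ has size exactly $(t-1)^2$, contains no $z=0$, and is $H$-invariant, hence is a union of exactly $t-1$ free $H$-orbits; this gives \emph{at most} $t-1$ common neighbour classes (one orbit may coincide with $[x]$ or $[y]$), and for dependent pairs $y=\lambda x$ with $\lambda\notin H$ the common neighbourhood is empty. So ``exactly $t-1$'' should be ``at most $t-1$,'' but the conclusion ($K_{2,t}$-freeness) stands, and the loops $\langle x,x\rangle\in H$ you flag really are only an $O(q)$ edge loss. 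Second, and more seriously, the interpolation to all $n$ is where the stated $O(n^{4/3})$ error term actually lives: you need prime powers $q\equiv 1\pmod{t-1}$ in intervals of length $O(q^{2/3})$, and ``known results on gaps between primes in arithmetic progressions'' is doing genuine, nontrivial analytic work there that you would need to cite precisely (a Hoheisel-type theorem for a fixed modulus) rather than wave at. As written, your proposal is a correct blueprint of F\"uredi's argument with the upper bound fully done and the lower bound's hard technical content acknowledged but deferred; since the paper itself treats the whole theorem as a black box, that is a reasonable level of detail, but the two items above are the places a referee of a self-contained proof would push back.
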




The Tur\'an problems for even cycles is among the most famous open problems in graph theory. Bondy and Simonovits~\cite{BS} proved that $\ex(n,C_{2k}) = O(n^{1+1/k})$, but lower bounds with matching orders of growth exist only for $k \in \{2,3,5\}$.  For hypergraphs, Gy\H{o}ri and Lemons~\cite{GL} proved that, as in the graph case, for $r \geq 3$, $k \geq 2$, $\ex(n, \Ber{C_{2k}}{r}) = O(n^{1+1/k})$. They also proved $\ex(n, \Ber{C_{2k+1}}{r}) = O(n^{1+1/k})$, which is rather surprising given that $\ex(n, C_{2k+1})=\Theta(n^2)$. As in the graph case, finding constructions for lower bounds is difficult. For example, there are no known constructions of $r$-uniform, Berge $C_4$-free hypergraphs with $\Theta(n^{3/2})$ edges when $r \geq 6$. See~\cite{GP, GL2, EGM, ergem, BG} for more related results.

In~\cite{GMV}, Gerbner, Methuku, and Vizer proved an upper bound for the Tur\'an number of Berge $K_{2,t}$ in $r$-uniform hypergraphs. They obtained asymptotically sharp bounds for $3$-uniform hypergraphs when $t \geq 7$. This was later extended by Gerbner, Methuku, and Palmer~\cite{GMP} for $t \geq 4$. 

\begin{thm}[Gerbner--Methuku--Palmer~\cite{GMP}] \label{bergek2t}
For $t \geq 4$, 
\[\ex(n, \Ber{K_{2,t}}{3})= \frac{1}{6}(t-1)^{3/2}n^{3/2}+o(n^{3/2}).\]
\end{thm}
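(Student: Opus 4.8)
The plan is to prove the two bounds separately: the lower bound from an explicit construction, and the upper bound from a double-counting argument controlled by F\"uredi's graph estimate (Theorem~\ref{thm:graphK2t}).

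For the \textbf{lower bound}, I would take a near-extremal $K_{2,t}$-free graph $G$ on $n$ vertices, as produced by F\"uredi, so that $|E(G)| = \frac{\sqrt{t-1}}{2}n^{3/2}+o(n^{3/2})$ and, crucially, $G$ is nearly regular with almost every edge lying in close to $t-1$ triangles (each adjacent pair having close to the maximum $t-1$ common neighbours). Let $H$ be the $3$-uniform hypergraph whose edges are exactly the triangles of $G$. A triple $\{a,b,c\}$ lies in $H$ only if all three of $ab,ac,bc$ are edges of $G$, so any hyperedge covering a pair $\{a,c\}$ forces $a\sim c$ in $G$. Consequently, if $H$ contained a Berge $K_{2,t}$ with core $\{a,b\}$ and apices $c_1,\dots,c_t$, every $c_i$ would be adjacent in $G$ to both $a$ and $b$, i.e.\ a common neighbour of $a$ and $b$; since $G$ is $K_{2,t}$-free there are at most $t-1$ such vertices, a contradiction. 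Counting, $|E(H)| = \frac13\sum_{ab\in E(G)}(\#\text{common neighbours of }a,b) = \frac{t-1}{3}|E(G)|+o(n^{3/2}) = \frac{(t-1)^{3/2}}{6}n^{3/2}+o(n^{3/2})$. The point needing care is verifying that F\"uredi's construction really does place close to $t-1$ triangles on almost all of its edges.

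For the \textbf{upper bound}, let $H$ be $\Ber{K_{2,t}}{3}$-free with $m$ edges, write $d_{uv}$ for the codegree of a pair $\{u,v\}$ (the number of hyperedges containing both), and let $\partial H$ be the shadow graph on the pairs of positive codegree. Since each hyperedge contains three pairs, $3m = \sum_{\{u,v\}} d_{uv}$, so it suffices to bound this weighted sum by $\frac{(t-1)^{3/2}}{2}n^{3/2}+o(n^{3/2})$. I would obtain this from two facts. First, an SDR/Hall argument: if a pair $\{a,b\}$ has $t$ common neighbours $c_1,\dots,c_t$ in $\partial H$, then for each $i$ one may choose a hyperedge $f_i\supseteq\{a,c_i\}$ and a hyperedge $g_i\supseteq\{b,c_i\}$, and the only coincidences among these $2t$ edges are of the restricted forms $f_i=\{a,c_i,c_j\}$, $g_i=\{b,c_i,c_j\}$, or $f_i=g_i=\{a,b,c_i\}$; a greedy selection (or a defect version of Hall's theorem) then yields $2t$ distinct edges, hence a Berge $K_{2,t}$, as soon as there are slightly more than $t$ common neighbours. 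Thus, after deleting $o(n^{3/2})$ edges, $\partial H$ is essentially $K_{2,t}$-free and Theorem~\ref{thm:graphK2t} gives $|E(\partial H)|\le \frac{\sqrt{t-1}}{2}n^{3/2}+o(n^{3/2})$. Second, a codegree-control step: one shows that the total weight coming from heavy pairs, those with $d_{uv}\ge t$, is $o(n^{3/2})$, so that up to a negligible error every surviving pair satisfies $d_{uv}\le t-1$. Combining, $3m=\sum_{\{u,v\}}d_{uv}\le (t-1)\,|E(\partial H)| + o(n^{3/2}) \le \frac{(t-1)^{3/2}}{2}n^{3/2}+o(n^{3/2})$, which is the desired bound.

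The \textbf{main obstacle} is the codegree-control step and, more generally, handling high-multiplicity configurations. A single ``book'' $\{a,b,x_1\},\dots,\{a,b,x_\lambda\}$, or a pair of large stars through one vertex, can pack arbitrarily many hyperedges onto a handful of vertices without ever creating a Berge $K_{2,t}$, so neither ``$d_{uv}\le t-1$'' nor ``$\partial H$ is $K_{2,t}$-free'' holds on the nose. The content of the argument is to show that such concentrations cannot persist at global scale: once a pair is heavy, the extra hyperedges on it interact with the rest of $H$ to create enough accessible common neighbours that a Berge $K_{2,t}$ is forced unless the heavy pairs, weighted by codegree, are few. Making this quantitative while keeping every loss down to $o(n^{3/2})$, so that the sharp leading constant $\frac16(t-1)^{3/2}$ survives rather than merely the correct order $n^{3/2}$, is the delicate heart of the proof.
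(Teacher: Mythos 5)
This statement is quoted from Gerbner--Methuku--Palmer \cite{GMP}; the paper contains no proof of it, so there is nothing internal to compare your attempt against and I can only judge the outline on its own terms. Your lower bound is essentially the standard one and is sound: the hypergraph of triangles of a $K_{2,t}$-free graph $G$ is $\Ber{K_{2,t}}{3}$-free (a Berge $K_{2,t}$ with core $\{a,b\}$ forces $t$ common neighbours of $a,b$ in $G$), and the triangle count reduces to the Alon--Shikhelman bound $\ex(n,K_3,K_{2,t})\ge \frac16(t-1)^{3/2}n^{3/2}-o(n^{3/2})$; the codegree regularity of F\"uredi's construction that you flag does hold and is exactly where that computation lives.

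The upper bound has a genuine gap, sitting precisely at the two claims you defer. First, ``slightly more than $t$ common neighbours in $\partial H$ forces a Berge $K_{2,t}$'' is false: take the hyperedges $\{a,c_1,c_2\},\{a,c_3,c_4\},\dots$ and $\{b,c_1,c_2\},\{b,c_3,c_4\},\dots$ through $c_{2t-2}$. The pair $\{a,b\}$ then has $2t-2\ge t$ common shadow-neighbours, yet only $2t-2<2t$ hyperedges are present, so no Berge $K_{2,t}$ exists there; because one hyperedge can represent two pairs $\{a,c_i\},\{a,c_j\}$, the SDR argument only engages at roughly $2t$ common neighbours. Hence $\partial H$ is a priori only $K_{2,O(t)}$-free, which loses a constant factor in Theorem~\ref{thm:graphK2t}, and repairing this by deletion is not obviously an $o(n^{3/2})$ operation since $\Theta(n^2)$ pairs could have shadow-codegree in $[t,2t)$. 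Second, neither this claim nor the codegree-control claim ($\sum_{d_{uv}\ge t} d_{uv}=o(n^{3/2})$) is actually argued, and your outline never invokes $t\ge 4$: if both claims held as stated, the identical argument would give $\ex(n,\Ber{C_4}{3})\le \frac16 n^{3/2}+o(n^{3/2})$, which is false --- there are Berge-$C_4$-free $3$-graphs with about $\frac{1}{2\sqrt{6}}\,n^{3/2}>\frac16 n^{3/2}$ edges, and the best known upper bound (quoted in this paper) is only $\frac{1}{\sqrt{10}}n^{3/2}$. So at least one of your two key claims must fail in general, and the actual proof in \cite{GMP} takes a different route, bounding $|E(H)|$ against the generalized Tur\'an number $\ex(n,K_3,K_{2,t})$ via a representative-pair decomposition of the hyperedges rather than against $(t-1)\cdot\ex(n,K_{2,t})$.
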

Focusing on the $r=3,t=2$ case, early upper bounds for $\ex(n,\Ber{C_4}{3})$ were implied by results of Alon and Shikhelman~\cite{AS} and F\"uredi and {\"O}zkahya~\cite{FO}. Currently the best known bound is $\ex(n, \Ber{C_4}{3}) \leq \frac{1}{\sqrt{10}}n^{3/2} + o(n^{3/2})$ due to Ergemlidze, Gy\H{o}ri, Methuku, Tompkins, and Salia~\cite{EGMTS}.

\subsection{Known results for forbidden traces}
Some earlier results for forbidding traces of graphs in hypergraphs where due to Mubayi and Zhao~\cite{MZ} who determined the asymptotic value of $\ex(n, \Tr{K_s}{r})$ for all $r$ when $s \in \{3,4\}$. They also conjectured that for $ s \geq 5$, $\ex(n, \Tr{K_s}{r}) \sim \left(\frac{n}{s-1}\right)^{s-1}$.

Sali and Spiro~\cite{SS} determined the order of magnitude of $\ex(n, \Tr{K_{s,t}}{r})$ when $t\geq (s-1)!+1,\ s \geq 2r-4$. Later, F\"uredi and Luo~\cite{FL} generalized their proof to deduce the order of magnitude of $ex(n, \Tr{F}{r})$ for all graphs $F$ in terms of their generalized Tur\'an numbers. In particular, they showed
\[\ex(n, \Tr{F}{r}) = \Theta( \max_{2\leq s\leq r} \;\; \ex(n, K_s, F)),\]
where $\ex(n, K_s, F)$ denotes another extremal function, the maximum number of copies of $K_s$ in an $F$-free graph on $n$ vertices.

When $F$ is non-bipartite, this implies $\ex(n, \Tr{F}{r}) = \Omega(n^2)$ for all $r$. This contrasts with the problem of forbidding Berge copies of $F$: Grosz, Methuku, and Tompkins~\cite{GMT} proved that for all $F$ there exists an $r_0$ such that for all $r \geq r_0$, $\ex(n, \Ber{F}{r}) = o(n^2)$. In particular, for large $r$, $\ex(n, \Ber{F}{r}) = o(\ex(n, F)).$

In the case where $F$ is outerplanar, bounds were obtained in terms of the Tur\'an number of $F$. 
\begin{thm}[F\"uredi--Luo~\cite{FL}]\label{thm:old}
    If $F$ is a $t$-vertex outerplaner graph, then \[\ex(n-r+2, F)\leq \ex(n,\Tr{F}{r})\le  \half r^r (t-2)^{r-2}\ex(n,F).\]
\end{thm}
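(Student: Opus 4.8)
The plan is to prove the two inequalities separately: the lower bound by an explicit construction and the upper bound by induction on the uniformity $r$.

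For the lower bound $\ex(n-r+2,F)\le \ex(n,\Tr{F}{r})$, I would take an extremal $F$-free graph $G$ on $n-r+2$ vertices and pad every edge with a common set $W$ of $r-2$ new vertices, so that the $r$-uniform hypergraph $H=\{e\cup W: e\in E(G)\}$ has $\ex(n-r+2,F)$ edges on $n$ vertices. To see that $H$ contains no $F$-trace, fix any $S\subseteq V(H)$ with $|S|=|V(F)|$ and examine $H|_S$. Since every hyperedge restricted to $S$ contains $W\cap S$, two-element traces arise only when $|W\cap S|\le 2$: if $W\cap S=\varnothing$ the traces are exactly the edges of $G[S]$, which contain no $F$; if $|W\cap S|=1$ the size-two traces form a star centered at the lone apex vertex; and if $|W\cap S|=2$ there is a single trace edge. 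Hence $H|_S$ contains $F$ only if $F$ embeds in a star, so this construction settles every outerplanar $F$ that is not a star (the star case is degenerate, with both sides $\Theta(n)$, and is checked separately).

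For the upper bound, the key is a reduction lemma that lowers the uniformity by one. Precisely, I would show that from any $j$-uniform $\Tr{F}{j}$-free hypergraph one can delete a single vertex from each edge to obtain a $(j-1)$-uniform $\Tr{F}{j-1}$-free hypergraph $H'$ with $|E(H)|\le j(t-2)\,|E(H')|$. Iterating from uniformity $r$ down to $2$, together with the base case $\ex(n,\Tr{F}{2})=\ex(n,F)$ — which holds because a $2$-uniform trace of the graph $F$ is forced to equal $F$ — yields
\[
\ex(n,\Tr{F}{r})\le \Big(\prod_{j=3}^{r} j(t-2)\Big)\ex(n,F)=\frac{r!}{2}(t-2)^{r-2}\ex(n,F)\le \half r^r (t-2)^{r-2}\ex(n,F),
\]
which is the claimed bound once one recalls $r!\le r^r$.

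The heart of the argument, and the step I expect to be the main obstacle, is the reduction lemma, where two competing requirements must be met in choosing which vertex to delete. On the one hand the new hypergraph must stay $\Tr{F}{j-1}$-free: arguing contrapositively, a putative $F$-trace of $H'$ on a set $S$ lifts to an $F$-trace of $H$ on the very same $S$ whenever none of the deleted vertices lands in $S$, so the deletion rule must be engineered (e.g.\ through a fixed vertex ordering) to prevent reintroducing the forbidden trace. On the other hand few hyperedges may collapse onto a common $(j-1)$-edge; here a naive ``delete the top vertex'' rule fails, since a sunflower with a fixed $(j-1)$-core has unbounded codegree yet can be trace-free, so the rule must also avoid collapsing such sunflowers. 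Reconciling these two constraints simultaneously is the delicate point, and I expect the sparsity and $2$-degeneracy of outerplanar $F$ — it has at most $2t-3$ edges and a vertex of degree at most $2$ — to be exactly what forces the collision count down to $t-2$ (the extra factor of $j$ recording the freedom in which of the $j$ vertices is removed), making the per-step loss $j(t-2)$ and hence the final constant attainable.
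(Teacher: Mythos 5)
This theorem is quoted from F\"uredi--Luo \cite{FL}; the paper you were given contains no proof of it, so there is nothing in-paper to compare against, and your proposal has to stand on its own. Your lower bound does stand: the ``common core'' construction $H=\{e\cup W:e\in E(G)\}$ is exactly the right idea, and your case analysis on $|W\cap S|$ correctly shows that any trace edge either lies in $G[S]$ (when $W\cap S=\varnothing$) or contains a fixed vertex of $W\cap S$, so the only escape is $F$ being a subgraph of a star. (You do still owe the star case, which the theorem as stated covers, and ``both sides are $\Theta(n)$'' is not a proof of the inequality there; but this is a minor loose end, and for the $F=C_4$ and $F=K_{2,t}$ applications in this paper the construction suffices.)

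The upper bound, however, has a genuine gap: the reduction lemma you call ``the heart of the argument'' is the entire content of the proof, and you neither prove it nor give a deletion rule, and there is good reason to doubt it holds in the form you state. The problem is not only the sunflower/collapse issue you flag, but the trace-freeness half: because a trace requires the \emph{exact} equality $\phi(e)\cap V(F)=e$, deleting a vertex from a hyperedge can \emph{create} traces --- an edge $h$ with $h\cap S=e\cup\{v\}$ (size $3$, hence useless for a graph trace) becomes, after deleting $v$, an edge $h'$ with $h'\cap S=e$. So $\Tr{F}{j}$-freeness of $H$ does not pass to $H'$ under any local deletion rule in an obvious way; your contrapositive only works ``whenever none of the deleted vertices lands in $S$,'' and since $S$ ranges over all $t$-subsets of $V(H)$ while every edge loses some vertex, this cannot be arranged uniformly. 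Reconciling this with the collapse bound $j(t-2)$ is not a technical detail to be filled in later --- it is the theorem. For what it is worth, the shape of the constant, $\tfrac12 r^r(t-2)^{r-2}=\tbinom{r}{2}\cdot\big(r(t-2)\big)^{r-2}\cdot\tfrac{r}{r-1}$ up to the crude bound $\tbinom{r}{2}\le\tfrac12 r^2$, points to the actual argument in \cite{FL}: one selects a representative \emph{pair} from each $r$-edge in a single step (rather than peeling off one vertex at a time), shows that a pair of large multiplicity would allow a greedy embedding of the $2$-degenerate outerplanar graph $F$ as a trace, and concludes that the representative graph is $F$-free with bounded edge multiplicity. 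I would abandon the uniformity-reduction induction and argue along those lines.
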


For $F = C_4$ this gives the bounds \begin{equation}\label{eq:oldC4}\half n^{3/2}+o(n^{3/2}) \le \ex(n,\Tr{C_4}{3})\le 27n^{3/2}+o(n^{3/2}).\end{equation}


\section{New Results}

Our main result is an upper bound for $\ex(n, \Tr{K_{2,t}}{3})$ which is effective for large $t$.  Here and throughout $\log$ denotes the natural logarithm.

\begin{thm}\label{thm:main}
	For $t\ge 14$,
	\[\ex(n,\Tr{K_{2,t}}{3})\le \rec{6}\Big(t^{3/2}+55 t\sqrt{\log t}\Big)n^{3/2}+o(n^{3/2}).\]
\end{thm}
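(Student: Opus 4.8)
The plan is to bound $|E(H)|$ by the number of triangles in its shadow and then to show that, up to a $t\sqrt{\log t}$ correction, this shadow behaves like a $K_{2,t}$-free graph. Let $G$ be the shadow of $H$: the graph on $V(H)$ with $\{u,v\}\in E(G)$ whenever some hyperedge contains both $u$ and $v$. Each hyperedge $\{x,y,z\}$ puts all three of its pairs into $G$, so it spans a triangle of $G$, and distinct hyperedges span distinct triangles; hence $|E(H)|\le \mathrm{tri}(G)$, the number of triangles in $G$. Writing $\mathrm{tri}(G)=\tfrac13\sum_{\{a,b\}\in E(G)}\mr{codeg}_G(a,b)$ and recalling that a $K_{2,t}$-free graph has all codegrees at most $t-1$ and, by Theorem~\ref{thm:graphK2t}, at most $\tfrac{\sqrt{t-1}}2 n^{3/2}+O(n^{4/3})$ edges, one sees that if $G$ were genuinely $K_{2,t}$-free we would immediately get $\mathrm{tri}(G)\le \tfrac16(t-1)^{3/2}n^{3/2}+o(n^{3/2})$, which already matches the main term of the theorem. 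So the entire task is to run this computation while controlling the ways in which the shadow can fail to be $K_{2,t}$-free.

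The first point to confront is that $\Tr{K_{2,t}}{3}$-freeness does \emph{not} imply that $G$ is $K_{2,t}$-free: the two hyperedges $\{a,c_1,c_2\}$ and $\{b,c_1,c_2\}$ already create a $C_4$ in the shadow on $\{a,b,c_1,c_2\}$, yet they carry no $C_4$-trace, because the only possible third vertices $c_1,c_2$ are trapped inside the core. The obstruction is exactly the extra condition in the trace definition, namely that the third vertices of the $2t$ hyperedges lie outside $V(K_{2,t})$. I would therefore isolate the part of the shadow where this obstruction cannot occur. Call a pair $\{u,v\}$ \emph{heavy} if its hyperedge-codegree $|\{w:\{u,v,w\}\in E(H)\}|$ exceeds $t$, and let $G^{+}$ be the subgraph of heavy pairs. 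The Key Lemma is that $G^{+}$ is $K_{2,t}$-free: if $\{a,b\}$ had $t$ common neighbours $c_1,\dots,c_t$ in $G^{+}$, I would take $\{a,b,c_1,\dots,c_t\}$ as the core, and for each $c_i$ the pairs $\{a,c_i\}$ and $\{b,c_i\}$ each lie in more than $t$ hyperedges, hence each offers more than $t$ candidate third vertices, of which at most $t$ are blocked by the core; this leaves a free choice on both sides and produces a $K_{2,t}$-trace. By Theorem~\ref{thm:graphK2t} this gives $|E(G^{+})|\le \tfrac{\sqrt{t-1}}2 n^{3/2}+O(n^{4/3})$, and every heavy pair has at most $t-1$ heavy common neighbours.

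The crux, and where I expect essentially all the difficulty to lie, is the \emph{light} pairs (codegree at most $t$). By the $C_4$ example these can genuinely appear inside shadow copies of $K_{2,t}$ without yielding a trace, so they cannot be excluded, and worse, they can be numerous, so they threaten not merely the lower-order term but the main term through both $\mr{codeg}_G(a,b)$ and $|E(G)|$. One must therefore show that the light pairs contribute at most $\tfrac16 t^{3/2}n^{3/2}+O\big(t\sqrt{\log t}\,n^{3/2}\big)$ to $\mathrm{tri}(G)$. I would approach this by a weighted double count: assign each light common neighbour to the smaller of the two link-degrees realising it, partition the pairs into $O(\log t)$ dyadic codegree bands, and apply the Füredi-type convexity argument band by band, using a Chernoff concentration to control how many pairs can simultaneously be light at a given vertex. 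Balancing the bands and choosing the heavy/light threshold of the form $t+\Theta(\sqrt{t\log t})$ is what forces the $\sqrt{\log t}$; expanding $(t+\Theta(\sqrt{t\log t}))^{3/2}=t^{3/2}+\Theta(t\sqrt{\log t})$ then yields the stated shape, with the constant $55$ emerging from optimising that threshold.

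In summary, the clean half is the reduction $|E(H)|\le\mathrm{tri}(G)$ together with the Füredi bound applied to the $K_{2,t}$-free graph $G^{+}$ of heavy pairs. The hard half is the light pairs, precisely because trace-freeness is strictly weaker than shadow-$K_{2,t}$-freeness: the third-vertex condition of the trace is automatically satisfiable on heavy pairs but must be tracked by hand on light ones. I expect that it is the careful, band-by-band accounting of these light pairs, rather than any single clever inequality, that both consumes the bulk of the work and pins down the secondary constant $55$.
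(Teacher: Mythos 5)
Your reduction to the shadow and your Key Lemma are sound: a pair of hyperedge-codegree greater than $t$ always admits a third vertex outside any prescribed core of size $t+2$, so the graph $G^{+}$ of heavy pairs is indeed $K_{2,t}$-free and Theorem~\ref{thm:graphK2t} applies to it. The gap is that the part you defer to ``dyadic bands and Chernoff concentration'' is not a compressible technicality --- it is the entire theorem --- and as set up it cannot be completed. There is no probability space in your argument, so there is nothing for a Chernoff bound to concentrate. More substantively, a pair of codegree at most $t$ gives no leverage toward building a trace, so trace-freeness places no direct restriction on the number of light pairs (a priori $\Omega(n^2)$ of them) or on their codegrees in the shadow; you never identify the extremal input that each ``band'' would be measured against. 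The one natural tool --- select from each hyperedge a pair of small codegree and show the resulting graph is $K_{2,O(t)}$-free --- loses a multiplicative factor equal to the threshold, so with your cut at $t$ (or at $t+\Theta(\sqrt{t\log t})$) the hyperedges containing a light pair are bounded only by $O(t\cdot \sqrt{t}\,n^{3/2})$, which is the size of the main term rather than of the error term.

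The paper resolves exactly this tension by placing the cut at a level $\delta$ of order $\sqrt{t}$ up to logarithmic factors, far below $t$. Edges are partitioned by the minimum codegree of their pairs into $A$ (some pair of codegree $1$), $B_\delta$ (some pair of codegree at most $\delta$, all at least $2$), and $C_\delta$ (all pairs of codegree exceeding $\delta$). The set $A$ projects to a genuinely $K_{2,t}$-free graph; $B_\delta$ projects, with multiplicative loss only $\delta$, to a $K_{2,3t+O(1)}$-free graph (Lemma~\ref{lem:medium}); and the crucial Lemma~\ref{lem:largecod} shows that codegrees inside $C_\delta$ are at most $(1+4\varepsilon_\delta)t-1$ even though each pair offers only $\delta+1\ll t$ candidate third vertices. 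The mechanism you are missing is the dominated-set argument (Lemmas~\ref{lem:simDom} and~\ref{lem:minDel}): one does not insist that every third vertex avoid the core, but instead finds, among the $(1+4\varepsilon_\delta)t$ common neighbours, a subset $D$ of size $t$ each of whose members lies in a hyperedge through $x$ and one through $y$ whose third vertex escapes $D$; a random-deletion argument produces such a $D$ after discarding only an $O(\log(\delta)/\delta)$ fraction of the candidates, and this loss is what generates the $t\sqrt{\log t}$ secondary term. The final count for $C_\delta$ (Lemma~\ref{lem:large}) is then a neighbourhood-expansion double count in the style of Gerbner--Methuku--Vizer rather than a triangle count in the shadow. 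Without a substitute for Lemma~\ref{lem:largecod}, your light-pair analysis has no starting point.
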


We note that the constant 55 can be improved by a more careful analysis.  On the other hand, for $t\ge 4$ we have $\ex(n, \Tr{K_{2,t}}{r}) \geq \frac{1}{6}(t-1)^{3/2}n^{3/2}+o(n^{3/2})$ by \eqref{bergebound} and Theorem~\ref{bergek2t}.  This together with Theorem~\ref{thm:main} gives the following.

\begin{cor}\label{asymp}
\[\lim_{t\to \infty}\lim_{n\to \infty} \frac{\ex(n, \Tr{K_{2,t}}{3})}{t^{3/2}n^{3/2}} = \frac{1}{6}.\]
\end{cor}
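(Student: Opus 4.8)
The plan is a direct sandwiching argument: I would use Theorem~\ref{thm:main} to control the ratio from above and Theorem~\ref{bergek2t} together with \eqref{bergebound} to control it from below, then let the two limits interact in the right order. No genuinely new idea is needed beyond substitution into the two cited results and a careful treatment of the iterated limit.

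First I would fix $t\ge 14$ and divide every bound by $t^{3/2}n^{3/2}$. From Theorem~\ref{thm:main}, letting $n\to\infty$ so that the $o(n^{3/2})$ error washes out after division, I obtain
\[\limsup_{n\to\infty}\frac{\ex(n,\Tr{K_{2,t}}{3})}{t^{3/2}n^{3/2}}\le \frac16\left(1+\frac{55\sqrt{\log t}}{\sqrt t}\right).\]
For the matching lower estimate, \eqref{bergebound} gives $\ex(n,\Tr{K_{2,t}}{3})\ge \ex(n,\Ber{K_{2,t}}{3})$, and Theorem~\ref{bergek2t} evaluates the right-hand side as $\frac16(t-1)^{3/2}n^{3/2}+o(n^{3/2})$; dividing and letting $n\to\infty$ yields
\[\liminf_{n\to\infty}\frac{\ex(n,\Tr{K_{2,t}}{3})}{t^{3/2}n^{3/2}}\ge \frac16\left(\frac{t-1}{t}\right)^{3/2}.\]

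Next I would send $t\to\infty$. The upper bound tends to $\frac16$ because $\sqrt{\log t}/\sqrt t\to 0$, and the lower bound tends to $\frac16$ because $(1-1/t)^{3/2}\to 1$. Thus for each large $t$ the $n\to\infty$ behavior of the ratio is pinned between two quantities that both converge to $\frac16$, which forces the iterated limit to equal $\frac16$, as claimed.

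The only point requiring care — and the closest thing to an obstacle — is the ordering of the two limits, since Theorem~\ref{thm:main} naturally bounds $\limsup_{n}$ while Theorem~\ref{bergek2t} bounds $\liminf_{n}$. To be rigorous I would phrase the conclusion as $\lim_{t\to\infty}\limsup_{n\to\infty}$ and $\lim_{t\to\infty}\liminf_{n\to\infty}$ both equalling $\frac16$, which is exactly what the two displayed inequalities give after sending $t\to\infty$; this determines the iterated limit in the statement. Everything else is routine substitution.
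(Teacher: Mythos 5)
Your proposal is correct and follows essentially the same route as the paper, which likewise deduces the corollary by sandwiching: the upper bound from Theorem~\ref{thm:main} and the lower bound $\ex(n,\Tr{K_{2,t}}{3})\ge \frac16(t-1)^{3/2}n^{3/2}+o(n^{3/2})$ from \eqref{bergebound} together with Theorem~\ref{bergek2t}. Your extra care in phrasing the inner limit via $\limsup_n$ and $\liminf_n$ is a reasonable refinement of the same argument, not a different approach.
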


Separately analysing the case for $K_{2,2} = C_4$,  we obtain tighter bounds which significantly improves \eqref{eq:oldC4}.
\begin{thm}\label{c4}
\[\frac{1}{2} n^{3/2} + o(n^{3/2}) \leq \ex(n,\Tr{C_4}{3})\le \f{5}{6}n^{3/2}+o(n^{3/2}).\]
\end{thm}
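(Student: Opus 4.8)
The lower bound is immediate from Theorem~\ref{thm:old} (equivalently the left inequality of \eqref{eq:oldC4}), since $\ex(n,\Tr{C_4}{3})\ge\ex(n-1,C_4)=\half n^{3/2}+o(n^{3/2})$, so the task is the upper bound. Let $H$ be an $n$-vertex $\Tr{C_4}{3}$-free triple system with $m$ edges. For a pair of vertices $p$ let $d(p)$ denote its codegree (the number of edges of $H$ containing $p$), and for $k\ge1$ let $\Gamma_{\ge k}$ be the graph on $V(H)$ whose edges are the pairs $p$ with $d(p)\ge k$. Counting incidences between edges and pairs gives the basic identity
\[3m=\sum_p d(p)=\sum_{k\ge1}e(\Gamma_{\ge k}),\]
so it suffices to prove $\sum_{k\ge1}e(\Gamma_{\ge k})\le\f{5}{2}n^{3/2}+o(n^{3/2})$, which is asymptotically $5\,\ex(n,C_4)$ and explains the constant $\f56=\f53\cdot\half$.

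The engine is that several auxiliary graphs attached to $H$ are forced to be $C_4$-free, after which Theorem~\ref{thm:graphK2t} (with $t=2$) bounds each by $\half n^{3/2}+o(n^{3/2})$. First, every link $L_w=\{p:p\cup\{w\}\in E(H)\}$ is $C_4$-free, since a $4$-cycle $a$-$b$-$c$-$d$ in $L_w$ is exactly a $C_4$-trace realized by the edges $\{a,b,w\},\{b,c,w\},\{c,d,w\},\{d,a,w\}$, whose intersections with $\{a,b,c,d\}$ are the four cycle edges. Second, $\Gamma_{\ge3}$ is $C_4$-free: given a $4$-cycle $a$-$b$-$c$-$d$ all of whose sides have codegree at least $3$, for each side I may choose a covering edge whose third vertex avoids the two \emph{opposite} cycle-vertices (only two vertices are forbidden while at least three apices are available), and any two edges chosen for different sides are automatically distinct, since a single edge covering two sides would contain three of $a,b,c,d$. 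This produces a $C_4$-trace, so $e(\Gamma_{\ge3})\le\half n^{3/2}+o(n^{3/2})$.

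It remains to control the low-codegree graphs $\Gamma_{\ge1}$ (the shadow) and $\Gamma_{\ge2}$, which need not be $C_4$-free, together with the tail $\sum_{k\ge3}e(\Gamma_{\ge k})=\sum_p(d(p)-2)^+$. For the shadow, a $4$-cycle fails to give a trace only when two consecutive sides must be covered by one common edge $\{a,b,c\}$; the obstruction is thus a ``triangle edge'' of $H$. For $\Gamma_{\ge2}$, a bad $4$-cycle forces some side to be covered exactly by two edges $\{a,b,c\},\{a,b,d\}$, filling in all six pairs of a $K_4$ on the cycle-vertices. In each case the bad $4$-cycles are confined to a restricted family of pairs, and deleting those pairs returns a $C_4$-free graph; so $e(\Gamma_{\ge1})$ and $e(\Gamma_{\ge2})$ each exceed the Füredi bound only by the number of such exceptional triangle/$K_4$ pairs, to be absorbed into the budget.

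The hard part is the tail $\sum_p(d(p)-2)^+$. It is genuinely of order $n^{3/2}$ — the extremal lower-bound construction, with all edges through a single vertex $w_0$ whose link is an extremal $C_4$-free graph, already has $\sum_p(d(p)-2)^+\sim n^{3/2}$ — and, crucially, it is \emph{not} controlled by the $C_4$-freeness of $\Gamma_{\ge3}$ alone: that bounds the number of high-codegree pairs but not the sum of their codegrees across the unbounded range of levels $k$. The decisive extra input is a trace-forcing phenomenon that crosses links: if two distinct vertices each lie in a codegree-$\ge3$ pair whose neighborhoods overlap, one can stitch covering edges from the two links into a $C_4$-trace (one checks this already defeats the natural two-apex analogue of the extremal construction). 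This should force the high-codegree pairs to concentrate, so that $\sum_p(d(p)-2)^+$ is essentially bounded by the size of a single extremal link. Making this concentration argument quantitative, so that the contributions of $\Gamma_{\ge1}$, $\Gamma_{\ge2}$, and the tail sum to exactly $\f52 n^{3/2}+o(n^{3/2})$ while simultaneously absorbing the triangle and $K_4$ exceptions, is the crux and where I expect the genuine difficulty to lie.
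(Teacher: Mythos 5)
The lower bound and two of your auxiliary observations are fine: the links of $H$ are $C_4$-free, and $\Gamma_{\ge 3}$ is $C_4$-free (your apex-counting argument for choosing four distinct covering edges is correct). But the proposal has a genuine gap, which you yourself flag: after the reduction to $\sum_{k\ge 1}e(\Gamma_{\ge k})\le \f{5}{2}n^{3/2}+o(n^{3/2})$, you only control one summand. The graphs $\Gamma_{\ge 1}$ and $\Gamma_{\ge 2}$ are not $C_4$-free, and the ``exceptional triangle/$K_4$ pairs'' you propose to delete are never bounded --- a priori there could be $\Theta(n^2)$ triangle edges, say, and nothing in the argument rules this out. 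More seriously, the tail $\sum_{k\ge 3}e(\Gamma_{\ge k})=\sum_p (d(p)-2)^+$ is, as you correctly observe, genuinely of order $n^{3/2}$ in the extremal example, and the $C_4$-freeness of $\Gamma_{\ge 3}$ says nothing about it. The ``concentration of high-codegree pairs'' mechanism that is supposed to rescue this is stated only as a hope; no lemma is formulated, let alone proved. So what you have is a program whose crux is missing, not a proof.

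For comparison, the paper sidesteps the level-sum entirely and never has to control a tail. It splits $E(H)=A\cup B$ where $A$ is the set of edges containing a pair of co-degree exactly $1$ in $H$. Projecting each edge of $A$ to such a pair yields a $C_4$-free graph (a $C_4$ there lifts to a trace because co-degree $1$ forces the apices off the cycle), giving $|A|\le \half n^{3/2}+o(n^{3/2})$. For $B=H\sm A$, Lemma~\ref{lem:cod} shows every pair has co-degree at most $2$ \emph{inside $B$} --- this is the step your framework lacks, since it bounds co-degrees after discarding $A$ rather than trying to sum the unbounded co-degrees of $H$ itself. A second-neighborhood double count (Lemmas~\ref{c4intersect}--\ref{lem:Vi4}) then gives $|B|\le \f{1}{3}n^{3/2}+o(n^{3/2})$, and $\half+\f13=\f56$. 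If you want to salvage your approach, the natural move is exactly this: peel off the edges supported on co-degree-$1$ pairs first, so that the remaining hypergraph has bounded co-degree and the tail disappears.
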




\section{Main Lemmas and the Proof of Theorem~\ref{thm:main}}
Given a hypergraph $H$, we define $d_H(x,y)$ to be the number of edges of $H$ containing $\{x,y\}$, and we call this number the co-degree of $\{x,y\}$.  We will often identify hypergraphs by their set of edges and write e.g. $H\sm A$ to denote the hypergraph $H$ after deleting some set of edges $A$ from $E(H)$.

For a hypergraph $H$ and $\delta \in \mathbb R^+$, define \[H_\delta^+:=\{e \in H: d_H(x,y) > \delta \text{ for all } \{x,y\} \subseteq e\}; \qquad H_\delta^- = H \setminus H_\delta^+.\]
That is, every edge in $H_\delta^-$ contains a pair with co-degree at most $\delta$.

Let $H$ be some 3-uniform hypergraph and fix $\delta \ge 2$. We partition the edges of $H$ into sets with small, medium, and large co-degrees in the following manner:
\begin{itemize}
\item[---]$A = H_1^-$, i.e., $A$ is the set of edges containing at least one pair with co-degree 1;
\item[---]$B_\delta = H_\delta^- \setminus A$, i.e., $B_\delta$ is the set of edges in which every pair has co-degree at least 2, but at least one pair of co-degree at most $\delta$ in $H$; and 
\item[---] $C_\delta= H \setminus (A \cup B_\delta) = H_\delta^+$, i.e., $C_\delta$ is the set of edges in which every pair is contained in at least $\delta$ other edges of $H$.
\end{itemize}
The bulk of the work in showing Theorem~\ref{thm:main} will be in proving the following technical lemmas.  For ease of notation, for $\del\ge 2$ we define  \begin{equation*}\label{eq:ep}\ep_\del = \frac{1+\log(\del+1)}{\del+1},\end{equation*}
and when $\del$ is clear from context we simply write $\ep$.

\begin{lem}\label{lem:cod}
Let $t\geq 2$ and let $H$ be a $\Tr{K_{2,t}}{3}$-free 3-uniform hypergraph on $[n]$.  For any pair $\{x,y\}$, we have $d_{H \setminus A}(x,y) \leq 3t-3$. Moreover, if $t=2$ then $d_{H\setminus A}(x,y) \leq 2$. 
\end{lem}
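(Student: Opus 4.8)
The plan is to prove the contrapositive: assuming $d:=d_{H\setminus A}(x,y)\ge 3t-2$, I will exhibit inside $H$ a copy of some member of $\Tr{K_{2,t}}{3}$. Let $Z=\{z:\{x,y,z\}\in H\setminus A\}$, so $|Z|=d$ and $x,y\notin Z$. By the definition of $H\setminus A$, every pair contained in an edge of $H\setminus A$ has co-degree at least $2$ in $H$; applied to each edge $\{x,y,z\}$ this gives, for every $z\in Z$, an \emph{$x$-partner} $c\neq y$ with $\{x,z,c\}\in H$ and a \emph{$y$-partner} $c'\neq x$ with $\{y,z,c'\}\in H$. Recall that to produce a $K_{2,t}$-trace it suffices to find two \emph{centres} $a_1,a_2$ and $t$ \emph{leaves} $b_1,\dots,b_t$, together with $2t$ edges $E_{i,j}\in H$ such that $E_{i,j}\cap S=\{a_i,b_j\}$ where $S=\{a_1,a_2,b_1,\dots,b_t\}$; equivalently, each $E_{i,j}$ is of the form $\{a_i,b_j,\ast\}$ with $\ast\notin S$.

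The primary construction takes $a_1=x$, $a_2=y$ and chooses the leaves to be a $t$-element subset $L\subseteq Z$. For a leaf $z$, an $x$-partner $c\notin S$ supplies the edge $\{x,z,c\}$ and a $y$-partner $c'\notin S$ supplies $\{y,z,c'\}$; note that the ``book'' edge $\{x,y,z\}$ is useless here, since it meets $S$ in three vertices. Thus $L$ yields a trace as soon as every $z\in L$ has an $x$-partner and a $y$-partner lying outside $L$. I would build $L$ greedily, adding one leaf at a time and reserving for it two partner vertices that are thereafter kept out of $L$. Under this bookkeeping each leaf consumes at most three vertices of $Z$ (itself and its two reserved partners), so one heuristically extracts $t$ leaves whenever $|Z|\ge 3(t-1)+1=3t-2$, which is exactly the threshold in the statement.

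The difficulty, and the step I expect to be the main obstacle, is that the greedy process can stall: a candidate leaf $z$ may have \emph{all} of its $x$-partners (or all of its $y$-partners) already in the current leaf set, leaving no fresh partner to reserve. I would resolve this with a second family of traces that uses the book edges directly. If some vertex $w$ is an $x$-partner of at least $t$ vertices of $Z$, take centres $y$ and $w$, let the leaves be $t$ of those vertices, and for each leaf $z$ realise $\{y,z\}$ by the book edge $\{x,y,z\}$ and $\{w,z\}$ by $\{x,z,w\}$; both third vertices equal $x\notin S$, so this is a valid trace. The symmetric case (a common $y$-partner of $t$ vertices, with centres $x,w$) is analogous. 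The plan is then to show that if neither shared-partner trace exists, every vertex is a partner of at most $t-1$ vertices of $Z$, and to combine this with the symmetry of the partner relation (each edge $\{x,z,c\}$ makes $z$ and $c$ mutually $x$-partners) to argue that the partners are spread out enough that the greedy selection never stalls before reaching $t$ leaves; making this combination yield the clean \emph{linear} constant $3t-3$, rather than a weaker quadratic bound, is precisely the delicate point. Finally, for the special case $t=2$ I would push the symmetry argument to its conclusion: once the shared-partner traces are excluded the partner neighbourhoods become pairwise disjoint, and for $d\ge 3$ this disjointness is incompatible with the edge-symmetry of the partner relation, which is what upgrades the bound from $3$ to $2$.
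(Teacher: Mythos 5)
Your setup matches the paper's: you argue the contrapositive, take $Z=\{z:\{x,y,z\}\in H\setminus A\}$ with $|Z|\ge 3t-2$, use the co-degree $\ge 2$ condition to give each $z\in Z$ an $x$-partner and a $y$-partner, and aim to select $t$ leaves of $Z$ all of whose chosen partners avoid the leaf set; this is exactly the ``simultaneously dominated set'' formulation the paper encodes via the auxiliary graphs $L_x(H,S,y)$, $L_y(H,S,x)$ and Lemma~\ref{lem:simDom}. Your secondary construction --- using a common partner $w$ of $t$ vertices of $Z$ as a centre together with the book edges $\{x,y,z\}$ --- is also sound, and is essentially the same trick the paper uses in its $t=2$ endgame.

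However, the central step is missing, and the tools you propose do not suffice to supply it. What you need is: if two graphs (with loops) on $Z$ both have minimum degree at least $1$, then some $D\subseteq Z$ with $|D|\ge |Z|/3$ is dominated in both. Your greedy accounting (``each leaf consumes at most three vertices'') is not valid because, as you note, a candidate leaf can have all of its $x$-partners already inside the current leaf set; and the shared-partner case analysis only yields that no vertex is an $x$- (or $y$-) partner of $t$ or more vertices of $Z$, i.e., the partner graphs have maximum degree at most $t-1$. Under that bound a single chosen leaf can strand up to $2(t-1)$ further candidates (those whose unique partner is that leaf), so the reservation argument only gives a bound of order $t^2$ rather than $3t-3$ --- precisely the ``delicate point'' you flag and do not resolve. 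The paper closes this gap with a different idea that needs no degree upper bound at all: decompose each partner graph into a spanning forest of stars and loop-vertices (Lemma~\ref{lem:star}) and take $D$ to be the complement of the two sets of star centres (Lemma~\ref{lem:min1}), which yields $|D|\ge \lceil|Z|/3\rceil\ge t$ directly. Likewise your $t=2$ sketch (``disjoint partner neighbourhoods contradict edge-symmetry'') is not carried out; the paper instead reduces via the ``moreover'' clause of Lemma~\ref{lem:min1} to the case $L_x\cup L_y=K_3$ and exhibits the explicit $C_4$ trace $\{x,u,v\},\{x,u,w\},\{x,y,v\},\{x,y,w\}$ on $\{y,u,v,w\}$.
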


\begin{lem}\label{lem:medium}Let $t \geq 4$ and let $H$ be a $\Tr{K_{2,t}}{3}$-free 3-uniform hypergraph on $[n]$. For $\delta, k \ge 2$, if $B_\delta$ has maximum co-degree $k$, then
\[e(B_\delta)\le \delta\cdot \half (k+3t-3)^{1/2}n^{3/2}+o(n^{3/2}).\]
\end{lem}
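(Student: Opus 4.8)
The plan is to bound $e(B_\delta)$ by controlling the number of pairs that can serve as a low co-degree pair, and then counting edges through these pairs. Every edge $e \in B_\delta$ contains at least one pair $\{x,y\}$ with $2 \le d_H(x,y) \le \delta$ by definition. The strategy is to associate to each edge of $B_\delta$ such a witnessing pair of co-degree at most $\delta$, and then charge the edges of $B_\delta$ to these pairs. Since each such pair lies in at most $\delta$ edges of $H$ (hence at most $\delta$ edges of $B_\delta$), if I can show that the number of distinct pairs arising as witnesses is at most roughly $\half(k+3t-3)^{1/2} n^{3/2}$, I will be done after multiplying by $\delta$.

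First I would set up the auxiliary graph $G$ on vertex set $[n]$ whose edges are exactly the pairs $\{x,y\}$ with $2 \le d_H(x,y) \le \delta$ that witness membership in $B_\delta$; every edge of $B_\delta$ contributes at least one edge to $G$, and conversely each edge of $G$ accounts for at most $\delta$ edges of $B_\delta$, so $e(B_\delta) \le \delta \cdot e(G)$. The crux is then to bound $e(G)$ by $\half(k+3t-3)^{1/2} n^{3/2} + o(n^{3/2})$. The natural tool is the Kővári–Sós–Turán bound: I would argue that $G$ contains no $K_{2,s}$ for $s = k+3t-3+1$, so that by the graph result (Theorem~\ref{thm:graphK2t}, or the classical $C_4$-counting computation) $e(G) \le \frac{\sqrt{s-1}}{2} n^{3/2} + O(n^{4/3}) = \half (k+3t-3)^{1/2} n^{3/2} + o(n^{3/2})$.

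The main obstacle, and the heart of the argument, will be proving that $G$ is $K_{2,s}$-free for the correct value of $s$. Suppose two vertices $x, y$ have $s$ common neighbors $z_1, \dots, z_s$ in $G$; I must derive a contradiction or a forbidden trace. Each $\{x, z_i\}$ (or $\{y, z_i\}$) is a pair of co-degree at least $2$ in $H$, so each generates edges in $H$, and I would try to build a $K_{2,t}$-trace using $x$ and $y$ as the degree-$t$ side and the $z_i$'s (together with the third vertices of the relevant edges) on the other side. Here I expect to use Lemma~\ref{lem:cod}: the co-degree of any pair in $H \setminus A$ is at most $3t-3$, which limits how many common neighbors in $G$ can be "absorbed" through high co-degree pairs. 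Combining the co-degree ceiling $3t-3$ from Lemma~\ref{lem:cod} with the maximum co-degree $k$ in $B_\delta$ should yield precisely $s = k + 3t - 3 + 1$ common neighbors as the threshold forcing a $K_{2,t}$-trace, since each common neighbor either supplies a distinct edge toward the trace or else is accounted for by a pair of bounded co-degree. Carefully tracking which pairs belong to $A$ versus $H \setminus A$, and ensuring the constructed trace edges are genuinely distinct and satisfy the trace condition $\phi(e) \cap V(K_{2,t}) = e$, will be the delicate bookkeeping that makes this step work.
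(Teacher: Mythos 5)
Your reduction is exactly the paper's: build the auxiliary graph $G$ on $[n]$ whose edges are witnessing pairs of co-degree at most $\delta$, observe $e(B_\delta)\le \delta\cdot e(G)$, show $G$ is $K_{2,r}$-free with $r=k+3t-2$, and finish with Theorem~\ref{thm:graphK2t}. The value of $r$ you name is also the correct one. But the entire difficulty of the lemma sits in the step you defer as ``delicate bookkeeping,'' and the mechanism you sketch for it is not the one that works. If $x,y$ have common neighbors $u_1,\dots,u_r$ in $G$, then for each $u_i$ there are edges of $B_\delta$ containing $\{x,u_i\}$ and $\{y,u_i\}$; the obstruction to assembling these into a $K_{2,t}$ trace on $\{x,y\}\cup\{u_{i_1},\dots,u_{i_t}\}$ is that the \emph{third} vertex of such an edge may itself be $y$ or another chosen $u_j$, violating the trace condition $\phi(e)\cap V(K_{2,t})=e$. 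Your proposal acknowledges this but offers no way around it, and your explanation of where $3t-3$ comes from (``the co-degree ceiling from Lemma~\ref{lem:cod} limits how many common neighbors can be absorbed'') is a misattribution: Lemma~\ref{lem:cod} is not invoked in the paper's proof of this lemma at all.

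What the paper actually does: at most $k$ of the $u_i$ lie in an edge $\{x,y,u_i\}$ of $H$ (these are discarded), leaving a set $S$ of at least $r-k=3t-2$ vertices on which the link graphs $L_x=L_x(H,S,y)$ and $L_y=L_y(H,S,x)$ have minimum degree at least $1$, by \eqref{eq:deg} together with the fact that every pair inside an edge of $B_\delta$ has co-degree at least $2$. Lemma~\ref{lem:min1} (resting on the star-decomposition Lemma~\ref{lem:star}) then produces a set $D\subseteq S$ of size at least $|S|/3\ge t$ that is \emph{simultaneously dominated} in $L_x$ and $L_y$, and Lemma~\ref{lem:simDom} converts such a set into a genuine $K_{2,t}$ trace, since domination is precisely the condition that each chosen vertex admits a witnessing hyperedge whose third vertex escapes the trace's vertex set. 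The factor of $3$ --- hence the constant $3t-2$, and the $3t-3$ under the square root --- comes from this simultaneous-domination argument, not from Lemma~\ref{lem:cod}. Without this device (or some substitute for steering the third vertices out of $\{x,y\}\cup D$), your argument does not close.
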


\begin{lem}\label{lem:largecod}Let $t \geq 2$ and let $H$ be a $\Tr{K_{2,t}}{3}$-free 3-uniform hypergraph on $[n]$. If $\del\ge 14$, then for any pair $\{x,y\}$ we have \[d_{C_\delta}(x,y) \leq \left(1 + 4\ep\right)t-1.\] 
\end{lem}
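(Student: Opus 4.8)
The plan is to fix a pair $\{x,y\}$, set $m = d_{C_\delta}(x,y)$, and let $c_1,\dots,c_m$ be the vertices with $\{x,y,c_i\}\in C_\delta$; I then want to show that if $m$ is too large, then $x,y$ together with $t$ of the $c_i$ form a $K_{2,t}$-trace. Since every such edge lies in $C_\delta=H_\delta^+$, each pair $\{x,c_i\}$ and $\{y,c_i\}$ has co-degree exceeding $\delta$, so after discarding the single edge $\{x,y,c_i\}$ there remain at least $\delta$ edges $\{x,c_i,w\}$ with $w\ne y$ (and likewise at least $\delta$ edges $\{y,c_i,w'\}$ with $w'\ne x$), each determined by its distinct ``outside'' vertex.

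To build the trace I would use $x,y$ as the two apices and select the part of size $t$ from the $c_i$. The difficulty is that the outside vertices $w$ might themselves be among the $c_j$, so they could land inside the core $\{x,y\}\cup\{\text{part}\}$ and spoil the trace. To get around this I would choose the part randomly: let $R$ be a random subset of $\{c_1,\dots,c_m\}$ including each $c_i$ independently with probability $p$ (to be optimized), and call $c_i\in R$ \emph{served} if it has both an edge $\{x,c_i,w\}$ and an edge $\{y,c_i,w'\}$ whose outside vertices avoid $R\cup\{x,y\}$. The key observation is that the outside vertices of the $\ge\delta$ relevant $x$-edges are distinct, so $c_i$ fails on the $x$-side only if every one of these at least $\delta$ vertices lies in $R$; since each candidate lies in $R$ with probability $p$ and these events are independent (the outside vertices are candidates distinct from $c_i$, and any non-candidate outside vertex forces service), this happens with probability at most $p^\delta$. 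Hence $\Pr[c_i\text{ served}\mid c_i\in R]\ge 1-2p^\delta$, and the expected number of served candidates is at least $mp(1-2p^\delta)$.

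I would then verify that a served set of size at least $t$ yields a genuine $K_{2,t}$-trace: taking $t$ served $c_i$ as the part and $x,y$ as apices, the chosen edges trace to exactly the pairs $\{x,c_i\}$ and $\{y,c_i\}$ because their outside vertices avoid the core, and all $2t$ edges are distinct (the $x$-edges are pairwise distinct since their candidate vertices differ, likewise the $y$-edges, and an $x$-edge cannot equal a $y$-edge since $x$ lies in no $y$-edge). As $H$ is $\Tr{K_{2,t}}{3}$-free, the number of served candidates is always at most $t-1$, so $mp(1-2p^\delta)\le t-1$, giving $m\le (t-1)/\big(p(1-2p^\delta)\big)$.

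The remaining step, and the place where the precise constants and the hypothesis $\delta\ge 14$ enter, is to choose $p$ so that $1/\big(p(1-2p^\delta)\big)$ is as close to $1$ as the target permits. Optimizing yields $p=(2(\delta+1))^{-1/\delta}$, for which the bound becomes $\tfrac{\delta+1}{\delta}(2(\delta+1))^{1/\delta}$; I expect the main (purely computational) obstacle to be checking that for $\delta\ge 14$ this is at most $1+4\ep$, after which $m\le(1+4\ep)(t-1)\le(1+4\ep)t-1$, as desired. The factor $4$ is deliberately generous, consistent with the paper's remark that such constants can be improved.
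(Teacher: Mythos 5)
Your argument is correct, and at its core it is the same probabilistic deletion idea the paper uses, but packaged as a self-contained one-shot argument rather than through the paper's dominated-set machinery. The paper's proof takes a putative set $S$ of $(1+4\ep)t$ common co-neighbors of $x,y$, forms the link graphs $L_x(H,S,y)$ and $L_y(H,S,x)$ (which have minimum degree at least $\del$ by the definition of $C_\del$), applies Lemma~\ref{lem:minDel} twice to get dominated sets $D_x,D_y$ of size at least $(1-\ep)|S|$ each, intersects them to obtain a set of size $(1-2\ep)(1+4\ep)t\ge t$ dominated in both, and finishes with Lemma~\ref{lem:simDom}. You instead run a single random experiment on the candidate set, treat both apices at once via a union bound (failure probability $2p^\del$ in place of two separate $\ep$-losses from intersecting), and close with the first-moment inequality $mp(1-2p^\del)\le t-1$; your observation that a non-candidate outside vertex ``forces service'' plays exactly the role of the loops in the paper's $L_x$. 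Your route bypasses Lemmas~\ref{lem:minDel} and~\ref{lem:simDom} entirely and actually yields a somewhat sharper constant (roughly $1+\log(2\del)/\del$ rather than $1+4\ep$ in the leading correction), at the cost of not being reusable --- the paper needs the dominated-set lemmas again for Lemmas~\ref{lem:cod}, \ref{lem:medium}, \ref{lem:expan}, and \ref{lem:Vi}, so it pays the setup cost once. The step you flagged as purely computational, namely that $\frac{\del+1}{\del}\bigl(2(\del+1)\bigr)^{1/\del}\le 1+4\ep_\del$ for $\del\ge 14$, does check out (at $\del=14$ the left side is about $1.37$ against $1.99$, and the gap only widens as $\del$ grows), and your closing inequality $(1+4\ep)(t-1)\le(1+4\ep)t-1$ is valid since $\ep\ge 0$.
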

\begin{lem}\label{lem:large}Let $t \geq 2$ and let $H$ be a $\Tr{K_{2,t}}{3}$-free 3-uniform hypergraph on $[n]$. For any $\del\ge 14$ and $k\ge (1+4\ep)t$, if $C_\delta$ has maximum co-degree at most $k$, then
\[e(C_\delta)\le \frac{1}{6} k^{3/2} n^{3/2} + +o(n^{3/2}).\]
\end{lem}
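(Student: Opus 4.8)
The plan is to bound $e(C_\del)$ by passing to the \emph{shadow graph} $G$ on vertex set $[n]$ whose edges are exactly the pairs $\{x,y\}$ with $d_{C_\del}(x,y)\ge 1$, and then to apply F\"uredi's Theorem~\ref{thm:graphK2t} to $G$. Since every pair has co-degree at most $k$ in $C_\del$, each edge of $C_\del$ contributes three pairs to $G$ while each pair of $G$ is counted at most $k$ times, so
\begin{equation*}
3\,e(C_\del)=\sum_{\{x,y\}}d_{C_\del}(x,y)\le k\cdot e(G),\qquad\text{hence}\qquad e(C_\del)\le \tfrac{k}{3}\,e(G).
\end{equation*}
Thus it suffices to prove that $G$ is $K_{2,k+1}$-free: Theorem~\ref{thm:graphK2t} then gives $e(G)\le \tfrac{\sqrt k}{2}n^{3/2}+o(n^{3/2})$ (here $k$ is a fixed constant as $n\to\infty$), and multiplying by $k/3$ yields $e(C_\del)\le \tfrac16 k^{3/2}n^{3/2}+o(n^{3/2})$, exactly as claimed.

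The heart of the matter is therefore showing that no two vertices $a,b$ have $k+1$ common neighbors in $G$. Suppose they did. Since $k\ge(1+4\ep)t$, this produces at least $(1+4\ep)t+1$ vertices $c$ with $d_{C_\del}(a,c)\ge 1$ and $d_{C_\del}(b,c)\ge 1$. For each such $c$ I fix an edge of $C_\del$ through $\{a,c\}$ and one through $\{b,c\}$; crucially, because these edges lie in $C_\del=H_\del^+$, each of the pairs $\{a,c\}$ and $\{b,c\}$ has co-degree strictly greater than $\del$ in $H$, so $H$ contains more than $\del$ edges through each of them. I would take these common neighbors as candidate \emph{spokes} and $a,b$ as the two \emph{hubs}, and construct a $\Tr{K_{2,t}}{3}$, contradicting the hypothesis that $H$ is $\Tr{K_{2,t}}{3}$-free.

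Building the trace is where the main difficulty lies. I must select $t$ of the candidate spokes $c$ and, for each, one edge of $H$ through $\{a,c\}$ and one through $\{b,c\}$ -- altogether $2t$ distinct edges -- so that the third vertex of each selected edge avoids the $(t+2)$-set $S$ consisting of $a$, $b$, and the chosen spokes; this is precisely the condition $\phi(e)\cap V(F)=e$ in the definition of a trace. Distinctness of the $2t$ edges, and collisions of a third vertex with a hub, are easily arranged because each relevant pair has more than $\del$ available edges. The genuine obstacle is that the third vertex of a selected $\{a,c\}$-edge (or $\{b,c\}$-edge) may coincide with another chosen spoke, and when $\del<t$ one cannot avoid this greedily pair-by-pair. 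This is exactly what the surplus of roughly $4\ep t$ extra common neighbors is for: one chooses the $t$ spokes and their edges by an averaging/greedy deletion argument, discarding candidates that would create too many such collisions, and the quantitative relationship $\ep=\ep_\del=\tfrac{1+\log(\del+1)}{\del+1}$ together with $\del\ge 14$ ensures that enough valid spokes survive to complete a $K_{2,t}$-trace. I expect this extraction step -- balancing the more-than-$\del$ edges available at each pair against the number of spoke-collisions -- to be the main technical obstacle, and it proceeds along the same lines as the co-degree analysis underlying Lemma~\ref{lem:largecod}.
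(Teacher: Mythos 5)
Your proposal is correct, and it takes a genuinely different route from the paper. The paper proves Lemma~\ref{lem:large} by adapting a second-moment argument: fixing $v$, controlling the edges at a neighbour $u$ that return to $N_1(v)$ (Lemma~\ref{lem:expan}), bounding $\sum_u|V_u|$ (Lemma~\ref{lem:Vi}), and then summing $\sum_{u\in N_1(v)}d(u)$ over all $v$ and applying Cauchy--Schwarz to obtain a quadratic inequality for the average degree. You instead pass to the shadow graph $G$ of pairs with positive co-degree in $C_\del$, note $3e(C_\del)\le k\,e(G)$, show that no two vertices of $G$ have $(1+4\ep)t$ (hence $k+1$) common neighbours, and invoke Theorem~\ref{thm:graphK2t}; the two uses of the co-degree bound $k$ ($e(C_\del)\le \frac{k}{3}e(G)$ and $e(G)\le\frac{\sqrt{k}}{2}n^{3/2}+o(n^{3/2})$) combine to give exactly $\frac16k^{3/2}n^{3/2}+o(n^{3/2})$. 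The step you flag as the ``main technical obstacle'' --- selecting $t$ spokes whose witnessing edges meet the trace's vertex set in exactly two vertices --- is in fact already done in the paper: a common neighbour $c$ of $a,b$ in $G$ lies in edges of $C_\del=H_\del^+$ through $\{a,c\}$ and through $\{b,c\}$, so $d_H(a,c),d_H(b,c)>\del$; by \eqref{eq:deg} the graphs $L_a(H,S,b)$ and $L_b(H,S,a)$ have minimum degree at least $\del$, and Lemma~\ref{lem:minDel} followed by Lemma~\ref{lem:simDom} yields a common dominated set of size at least $(1-2\ep)(1+4\ep)t\ge t$ and hence a $K_{2,t}$ trace, verbatim as in the proofs of Lemmas~\ref{lem:largecod} and~\ref{lem:Vi}. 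So your argument closes completely with machinery already established in the paper, and it is arguably shorter than the paper's proof, outsourcing the Cauchy--Schwarz work to F\"uredi's theorem; what the paper's more hands-on expansion argument buys is independence from the (nontrivial) graph theorem and a template that it reuses for the sharper $C_4$ analysis in Section~\ref{sectionc4}.
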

Assuming these lemmas, we can prove the following technical theorem.
\begin{thm}\label{k2t}Fix $t$ and let $g(t)$ be any function such that $14 \leq t/g(t) \leq t$. Then
	\[\ex(n, \Tr{K_{2,t}}{3}) \leq \frac{1}{2}\sqrt{t-1}n^{3/2}+\frac{\sqrt{6}}{2} \cdot \frac{t^{3/2}}{g(t)} n^{3/2} +\frac{1}{6} (t + 5g(t) \log(t))^{3/2} n^{3/2} + o(n^{3/2}).\]
\end{thm}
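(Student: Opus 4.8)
The plan is to fix $\del = t/g(t)$ — the hypothesis $14\le t/g(t)\le t$ guarantees $\del\ge 14$ (so Lemmas~\ref{lem:largecod} and~\ref{lem:large} apply) and $\del\le t$ — and to bound $e(H)=e(A)+e(B_\del)+e(C_\del)$ by treating the three classes separately. The second and third terms of the target are delivered almost immediately by the four lemmas, whereas the first term $\half\sqrt{t-1}\,n^{3/2}$ comes from a direct comparison of $A$ with an ordinary $K_{2,t}$-free graph. Since this last comparison is the only ingredient not already packaged as a lemma, I would establish it first.

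To bound $e(A)$: for every $e\in A$ choose one pair $p(e)\subseteq e$ with $d_H(p(e))=1$, which exists by the definition of $A$. Since a co-degree-one pair lies in a unique edge of $H$, the map $e\mapsto p(e)$ is injective, so the graph $G=\{p(e):e\in A\}$ has $e(G)=e(A)$. I claim $G$ is $K_{2,t}$-free. Suppose instead $G$ contains $K_{2,t}$ with parts $\{a,b\}$ and $\{c_1,\dots,c_t\}$, and let $e_i,f_i$ be the unique edges of $H$ through $\{a,c_i\},\{b,c_i\}$ respectively. Because $p$ is injective and the $2t$ pairs $\{a,c_i\},\{b,c_i\}$ are pairwise distinct, the edges $e_1,\dots,e_t,f_1,\dots,f_t$ are pairwise distinct. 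Writing $e_i=\{a,c_i,x_i\}$: if $x_i=c_j$ then $e_i$ contains the co-degree-one pair $\{a,c_j\}$, forcing $e_i=e_j$, and if $x_i=b$ then $e_i$ contains $\{b,c_i\}$, forcing $e_i=f_i$; both contradict distinctness. Hence $x_i\notin S:=\{a,b,c_1,\dots,c_t\}$, and symmetrically the third vertex of each $f_i$ avoids $S$. Thus $e_i\cap S=\{a,c_i\}$ and $f_i\cap S=\{b,c_i\}$, exhibiting a member of $\Tr{K_{2,t}}{3}$ inside $H$ — a contradiction. Therefore $e(A)=e(G)\le \ex(n,K_{2,t})=\half\sqrt{t-1}\,n^{3/2}+o(n^{3/2})$ by Theorem~\ref{thm:graphK2t}.

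For the remaining two classes I invoke the lemmas with $\del=t/g(t)$. By Lemma~\ref{lem:cod} every pair has co-degree at most $3t-3$ in $H\setminus A\supseteq B_\del$, so $B_\del$ has maximum co-degree $k\le 3t-3$; feeding this into Lemma~\ref{lem:medium} and using monotonicity in $k$ gives
\[e(B_\del)\le \del\cdot\half(6t-6)^{1/2}n^{3/2}+o(n^{3/2})=\f{\sqrt6}{2}\cdot\f{t\sqrt{t-1}}{g(t)}\,n^{3/2}+o(n^{3/2}).\]
By Lemma~\ref{lem:largecod} (here $\del\ge 14$) the hypergraph $C_\del$ has maximum co-degree at most $(1+4\ep)t-1<(1+4\ep)t$, so Lemma~\ref{lem:large} with $k=(1+4\ep)t$ yields $e(C_\del)\le\rec6\big((1+4\ep)t\big)^{3/2}n^{3/2}+o(n^{3/2})$. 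It then remains to check that the $B_\del$ and $C_\del$ bounds sum to at most the second and third terms of the target. Here I would substitute $\ep=\ep_\del=\f{1+\log(\del+1)}{\del+1}$ with $\del=t/g(t)$ and use $\del\le t$ to estimate $4\ep t\le 5g(t)\log t$, which turns the $C_\del$ bound into $\rec6\big(t+5g(t)\log t\big)^{3/2}n^{3/2}$; after bounding $\sqrt{t-1}\le\sqrt t$ in the $B_\del$ term this completes the proof.

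The main obstacle is twofold. First, the bound $e(A)\le\ex(n,K_{2,t})$ is deceptively delicate: a naive injection of $A$ into the co-degree-one pairs does not obviously land in a $K_{2,t}$-free graph, because the third vertices of the edges realizing a would-be $K_{2,t}$ might lie in $S$; the point is that choosing a \emph{single} representative pair per edge makes the $2t$ relevant edges distinct, and this distinctness is exactly what forces the third vertices out of $S$. Second, the constant bookkeeping in the final step is tight: the quantity $(1+4\ep)t$ can slightly exceed $t+5g(t)\log t$ for $t$ near $14$, so a strictly term-by-term comparison of the $C_\del$ estimate against the third target term fails, and one must spend the gain $\f{\sqrt6}{2}\f{t}{g(t)}(\sqrt t-\sqrt{t-1})\,n^{3/2}$ saved by keeping $\sqrt{t-1}$ in the $B_\del$ estimate to absorb the surplus. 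Keeping careful track of which inequalities are lossy is where I expect the real care to be needed.
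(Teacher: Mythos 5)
Your proposal is correct and follows essentially the same route as the paper: the same decomposition into $A$, $B_\delta$, $C_\delta$ with $\delta=t/g(t)$, the same auxiliary graph of co-degree-one pairs to bound $e(A)$ via Theorem~\ref{thm:graphK2t}, and the same application of Lemmas~\ref{lem:cod}--\ref{lem:large} to the other two parts. Your extra care at the end is warranted — the paper's direct claim $4\ep_\delta\le 5\log(\delta)/\delta$ is in fact slightly loose for $\delta$ near $14$ — and your device of retaining the $\sqrt{t-1}$ factor in the $B_\delta$ bound to absorb the small surplus is a legitimate (indeed more careful) way to close that gap.
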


Let us first show that this implies our main result.
\begin{proof}[Proof of Theorem~\ref{thm:main}, assuming Theorem~\ref{k2t}]
	Take $g(t)=\rec{7}\sqrt{t\log(t)}$, and note that $t \geq t/g(t) = 7\sqrt{t/\log(t)} \ge 14$ when $t \geq 14$, so we can apply the bound of Theorem~\ref{k2t}.  
	Because $\sqrt{t-1}\le t\log(t)^{-1/2}$ for $t\ge 14$, we have
	\[\half \sqrt{t-1}+\f{\sqrt{6}t^{3/2}}{2g(t)}\le \l(\half+\f{7\sqrt{6}}{2}\r)t\log(t)^{-1/2}\le \rec{6}\cdot 52 t\log(t)^{-1/2}.\]
	We also have \begin{align*}\frac{1}{6} (t + 5g(t) \log(t))^{3/2}&=\rec{6}t^{3/2}\l(1+\f{5}{7}(t\log(t))^{-1/2}\r)^{3/2}\le \rec{6}t^{3/2}(1+(t\log(t))^{-1/2})^{2}\\&\le \rec{6}t^{3/2}(1+3(t\log(t))^{-1/2})=\rec{6}(t^{3/2}+3t\log(t)^{-1/2}).\end{align*}
	
	Combining this with the inequality above gives the desired result.
\end{proof}
It remains to prove Theorem~\ref{k2t}.
\begin{proof}[Proof of Theorem~\ref{k2t}, assuming Lemmas~\ref{lem:cod} --~\ref{lem:large}]
Let $G$ be a graph on $[n]$ whose edge set is obtained by selecting from each $e \in A$ a pair of vertices with co-degree 1. Suppose there exists a subgraph $K\sub G$ which is a copy of $K_{2,t}$. For every edge $xy$ in $K$, there exists some vertex $z$ such that $\{x,y,z\}\in A$.  Note that we can not have $z\in V(K)$, as if say $xz\in E(K)$, then this implies there exists some other edge $\{x,z,w\}\in A$ and hence $d_H(x,z)>1$, a contradiction to how $A$ was defined. Therefore the edges of $A$ corresponding to $K$ in $G$ intersect $V(K)$ in exactly the edges of $K$.  This forms a $K_{2,t}$ trace in $H$, a contradiction.  We conclude by Theorem~\ref{thm:graphK2t} that
\begin{equation}\label{E1}
e(A) = e(G) \leq ex(n, K_{2,t}) \leq  \frac{\sqrt{t-1}}{2} n^{3/2} + o(n^{3/2}).
\end{equation}

Now set $\delta =t/g(t)\ge 14$. By Lemma~\ref{lem:cod} we conclude that the hypergraph induced by $B_\del\sub H\sm A$ has maximum co-degree at most $k=3t-3$.  Thus by Lemma~\ref{lem:medium} we obtain
\begin{equation}\label{E2}
e(B_\delta) \leq \frac{\delta}{2} (6t)^{1/2}n^{3/2} + o(n^{3/2}).
\end{equation}

From Lemmas~\ref{lem:largecod} and~\ref{lem:large} with $k = (1 + 4\ep)t \leq (1 + 5 \log(\delta)\delta^{-1})t$, we get
\begin{equation}\label{E3}
e(C_\delta) \leq \frac{1}{6} ((1 + 5 \log(\delta)\delta^{-1})t)^{3/2}n^{3/2} + o(n^{3/2}).
\end{equation}

For $\delta = t/g(t)$,
\begin{eqnarray*}e(C_\delta) &\leq & \frac{1}{6} (t + 5\log(t/g(t))g(t))^{3/2} n^{3/2} +o(n^{3/2})\\
&\leq &  \frac{1}{6} (t + 5 g(t) \log t)^{3/2} n^{3/2} +o(n^{3/2}),
\end{eqnarray*}
where the last inequality uses the fact that $t/g(t) \leq t$. Combining this with~\eqref{E1},~\eqref{E2}, and \eqref{E3} gives the desired result.
\end{proof}

The rest of the paper is organized as follows.  In Section~\ref{sec:dom} we introduce the notion of dominated sets and prove Lemmas~\ref{lem:cod}--\ref{lem:largecod}.  We prove Lemma~\ref{lem:large} in Section~\ref{sec:large}.  Finally, focusing on the $t=2$ case, we prove Theorem~\ref{c4} in Sections~\ref{sectionc4}.

We gather some standard notation we use throughout the paper.  For a graph $G$ we let $\Del(G)$ denote its maximum degree, and we define $\al(G)$ to be the size of a maximum independent set in $G$.  $N_G(v)$ denotes the neighborhood set of $v$ in $G$.  We will write edges either as $\{x,y\}$ or $xy$ depending on the context, and similarly for hyperedges we write either $\{x,y,z\}$ or $xyz$.

\section{Dominated Sets and Co-Degrees}\label{sec:dom}
In the literature, a set $A$ of vertices in a graph is a {\em dominating set} if every vertex is either in $A$ or has a neighbor in $A$. Here we introduce the notion of dominated sets in graphs with loops. Suppose $G$ is a graph, possibly with loops. We say $D \subseteq V(G)$ is a {\em dominated set} if for every $v \in D$, either $v$ has a loop edge or $v$ has a neighbor outside of $D$.  We define the degree of a vertex $v$ in such a graph to be the number of edges incident to $v$ (counting loops with multiplicity). 

Given a hypergraph $H$, distinct vertices $x,y\in V(H)$, and $S\sub V(H)\sm \{x,y\}$, we define the graph $L_{x}=L_{x}(H,S,y)$ on $S$ by adding an edge $uv$ with $u,v\in S$ if $\{u,v,x\}\in E(H)$, and we add a loop to $u$ for each edge of the form $\{u,v,x\}\in E(H)$ with $v\notin S\sm \{y\}$. The key observation is the following.

\begin{lem}\label{lem:simDom}
	Let $H$ be a 3-uniform hypergraph $x,y\in V(H)$, and $S\sub V(H)\sm \{x,y\}$.  If there exists a set $D\sub S$ of size $t$ such that $D$ is dominated in both $L_x=L_x(H,S,y)$ and $L_y=L_y(H, S, x)$, then $H$ contains a $K_{2,t}$ trace.
\end{lem}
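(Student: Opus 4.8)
The plan is to construct an explicit $K_{2,t}$ trace in $H$ using $x$ and $y$ as the two vertices on the ``small side'' and a suitable size-$t$ set of vertices as the ``large side.'' Recall that a $K_{2,t}$ trace on vertex set $\{x,y\}\cup W$ (with $|W|=t$) requires, for each edge $xw$ and $yw$ of the underlying $K_{2,t}$, a distinct hyperedge of $H$ whose intersection with $\{x,y\}\cup W$ equals that pair. So for each $w$ in the large side I need a hyperedge realizing $xw$ and a hyperedge realizing $yw$, all $2t$ of these hyperedges distinct, and none of them containing a ``bad'' third vertex that would introduce an unwanted edge into the trace.

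First I would unpack what domination buys us. The graph $L_x=L_x(H,S,y)$ records, via honest edges $uv$, the hyperedges $\{u,v,x\}$ with both $u,v\in S$, and via loops at $u$ the hyperedges $\{u,v,x\}$ where the third vertex $v$ lies outside $S\setminus\{y\}$. Saying $D$ is dominated in $L_x$ means every $u\in D$ either carries a loop or has an $L_x$-neighbor outside $D$. Either way, $u$ participates in a hyperedge through $x$ whose third vertex avoids $D\setminus\{u\}$ — a loop gives a hyperedge $\{u,v,x\}$ with $v\notin S\setminus\{y\}$ (so in particular $v\notin D$ since $D\subseteq S$ and $v\neq u$; the case $v=y$ is fine because $y$ is a designated side vertex), and an edge to some $u'\in S\setminus D$ gives the hyperedge $\{u,u',x\}$ with $u'\notin D$. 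The same holds symmetrically in $L_y$. Thus for each $u\in D$ I can select a hyperedge $e_u^x\ni\{u,x\}$ and a hyperedge $e_u^y\ni\{u,y\}$, in each case with the third vertex lying outside $D\setminus\{u\}$.

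Next I would verify these $2t$ hyperedges yield a genuine $K_{2,t}$ trace on $S':=\{x,y\}\cup D$. Restricting each $e_u^x$ to $S'$ gives a set containing $\{u,x\}$; since its third vertex is outside $D\setminus\{u\}$, the restriction is either $\{u,x\}$ itself or $\{u,x,y\}$ — in both cases it captures the pair $ux$ and introduces no spurious pair inside $D$. The analogous statement holds for $e_u^y$. Hence the bijection sending the $K_{2,t}$-edge $ux$ to $e_u^x$ and $uy$ to $e_u^y$ realizes $K_{2,t}$ as a trace, provided the chosen hyperedges are pairwise distinct, which is the crux.

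The main obstacle is exactly this distinctness, and I expect the argument to hinge on it. A hyperedge of $H$ has three vertices, so a single hyperedge could a priori be selected twice. The danger is a hyperedge $\{u,u',x\}$ with $u,u'\in D$ serving as both $e_u^x$ (an edge from $u$ to $u'\notin D\setminus\{u\}$) — but this is excluded, since the third vertex must avoid $D\setminus\{u\}$ and $u'\in D\setminus\{u\}$. So a hyperedge realizing a pair through $x$ has its two $D$-relevant roles pinned down: it contributes to at most one $e_u^x$. The genuinely worrying collision is between an $x$-edge and a $y$-edge, e.g. $e_u^x=\{u,x,y\}=e_u^y$; I would handle this by arguing such coincidences can be avoided in the selection or absorbed by the slack in the definition of domination, possibly choosing the dominating witnesses greedily so that whenever a loop-hyperedge equals $\{u,x,y\}$ for both roles, an alternative witness is used. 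Making this bookkeeping airtight — ensuring all $2t$ selected hyperedges are distinct — is where the real care lies; the rest of the proof is the routine verification above.
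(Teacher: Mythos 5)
Your overall strategy is the paper's: for each $u\in D$ use domination to select witness hyperedges $e_u^x\supseteq\{x,u\}$ and $e_u^y\supseteq\{y,u\}$ whose third vertex avoids $D$, and let these $2t$ hyperedges realize the trace on $\{x,y\}\cup D$. The genuine gap is your treatment of the case $u_x=y$. A trace requires $\phi(e)\cap V(F)=e$ \emph{exactly}, so the hyperedge assigned to the $K_{2,t}$-edge $xu$ must meet $\{x,y\}\cup D$ in precisely $\{x,u\}$; a restriction equal to $\{x,u,y\}$ is a $3$-element set, not the edge $xu$, and equivalently is not an edge of any subhypergraph of $H|_{\{x,y\}\cup D}$ isomorphic to $K_{2,t}$. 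So ``the case $v=y$ is fine because $y$ is a designated side vertex'' is false --- this is exactly the situation the parameter $y$ in $L_x(H,S,y)$ exists to exclude. The loop condition $v\notin S\sm\{y\}$ must be read, consistently with the justification of \eqref{eq:deg} (whose $-1$ accounts for the one possible edge $\{x,u,y\}$), as $v\notin S$ \emph{and} $v\ne y$; under your literal reading the clause ``$\sm\{y\}$'' is vacuous because $y\notin S$, and the lemma as you argue it is actually false (take $t=2$ and $H=\{xu_1y,\,xu_2y,\,yu_1z_1,\,yu_2z_2\}$: the set $\{u_1,u_2\}$ would be ``dominated'' in both graphs, yet $H$ has no $C_4$ trace). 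With the intended definition every loop witness satisfies $u_x\notin S\cup\{y\}$, so $e_u^x\cap(\{x,y\}\cup D)=\{x,u\}$ on the nose and the verification you wanted goes through.

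Conversely, the distinctness of the $2t$ hyperedges, which you call ``the crux'' and leave unresolved (proposing a greedy re-selection of witnesses), requires no extra work once the point above is fixed: every $e_u^x$ contains $x$ but not $y$, and every $e_u^y$ contains $y$ but not $x$, so no $x$-edge coincides with a $y$-edge; and for distinct $u,w\in D$ an equality $\{x,u,u_x\}=\{x,w,w_x\}$ would force $u=w_x\in D$, contradicting $w_x\notin D$ --- an observation you already made yourself for a different purpose. So your proposal has the correct skeleton, but the step you dismiss in one clause is the real issue, and the step you flag as the main difficulty is immediate.
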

\begin{proof}
	By assumption of $D$ being dominated in $L_x$, for all $u\in D$ there exists a vertex $u_x$ with $\{x,u,u_x\}\in E(H)$ such that either $u_x\notin S$ (if $u$ has a loop) or $u_x \in S$ but $u_x\notin D$.  Similarly one can find edges of the form $\{y,u,u_y\}$ which intersect $\{x,y\}\cup D$ in exactly two vertices.  This gives a $K_{2,t}$ trace in $H$ with vertex set $\{x,y\} \cup D$ and edge set $\{\{x,u,u_x\}:u \in D\} \cup \{\{y,u,u_y\}:u \in D\}.$
\end{proof}
The other important observation we make is
\begin{equation}\label{eq:deg}
d_{L_x}(u)\ge d_{H}(x,u)-1.
\end{equation}
Indeed, every edge $\{x,u,v\}\in E(H)$ contributes to an edge involving $u$ in $L_x$ (possibly as a loop) unless $v=y$.

Thus our goal is to find large sets that are simultaneously dominated in two graphs.  The most general lemma we have in this direction is the following, which is an easy adaptation of a standard proof for finding a small dominating set (see for example~\cite{alonspencer}).  Recall that we define $\ep=\ep_\delta=\f{1+\log(\del+1)}{\del+1}$.

\begin{lem}\label{lem:minDel}
	Let $G$ be an $n$-vertex graph with loops with minimum degree at least $\del\ge 2$.  Then $G$ has a dominated set of size at least $(1-\ep)n$.
\end{lem}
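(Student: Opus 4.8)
The plan is to recast the statement as a dominating-set bound and prove it via the standard probabilistic alteration argument, taking care of the loops. Writing $D$ for the desired dominated set and $S=V(G)\sm D$ for its complement, the condition ``$D$ is dominated'' translates to: every loopless vertex $v\notin S$ has a neighbor in $S$ (vertices carrying a loop are automatically allowed in $D$). Thus it suffices to produce a set $S$ with $|S|\le \ep n$ that dominates every loopless vertex, since then $|D|=n-|S|\ge (1-\ep)n$.

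First I would sample a random set $X\sub V(G)$ by including each vertex independently with probability $p$, where $p$ will be optimized later. Then I would let $Y$ be the set of all loopless vertices $v$ such that $v\notin X$ and $v$ has no neighbor in $X$, and set $S=X\cup Y$. The set $S$ dominates every loopless vertex by construction: if $v$ is loopless and $v\notin S$, then in particular $v\notin Y$, which (since $v\notin X$) forces $v$ to have a neighbor in $X\sub S$. Consequently $D=V(G)\sm S$ is a dominated set.

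Next I would bound $\E[|S|]\le \E[|X|]+\E[|Y|]$. Clearly $\E[|X|]=pn$. For the repair set, a fixed loopless vertex $v$ lands in $Y$ only if $v$ and each of its (at least $\del$, all distinct) neighbors avoid $X$; since this involves at least $\del+1$ distinct vertices, $\P[v\in Y]\le (1-p)^{\del+1}\le e^{-p(\del+1)}$, so $\E[|Y|]\le n e^{-p(\del+1)}$. Hence $\E[|S|]\le pn+ne^{-p(\del+1)}$. Choosing $p=\f{\log(\del+1)}{\del+1}$, which lies in $(0,1)$ for $\del\ge 2$, makes the right-hand side equal to $\f{1+\log(\del+1)}{\del+1}\,n=\ep n$. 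By averaging there is a choice of $X$ with $|S|\le \ep n$, and the corresponding $D$ satisfies $|D|\ge (1-\ep)n$, as required.

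The only real subtlety I anticipate is the loop bookkeeping: I must ensure that loop-bearing vertices are excluded from $Y$ (they never need domination) while verifying that each loopless vertex genuinely contributes $\del+1$ distinct vertices to the avoidance event, so that the exponent in $(1-p)^{\del+1}$ is correct. Since each loopless vertex has at least $\del$ distinct neighbors in the graphs to which we apply this lemma (the $L_x$ are simple apart from loops), this holds; the remaining optimization of $p$ and the inequality $1-p\le e^{-p}$ are routine.
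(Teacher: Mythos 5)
Your proof is correct and is essentially the paper's argument viewed from the complement: the paper samples the prospective dominated set $D$ directly with probability $1-p$ and deletes the loopless vertices whose entire closed neighborhood was selected, which is exactly your $X\cup Y$ construction with the roles of $X$ and $V(G)\sm X$ swapped, and both optimizations yield the same bound $\ep_\del=\f{1+\log(\del+1)}{\del+1}$. Your explicit remark that loopless vertices have $\del$ distinct neighbors (so the exponent $\del+1$ is justified) is a point the paper leaves implicit.
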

\begin{proof}
	Let $D\sub V(G)$ be a random set obtained by picking each vertex of $G$ independently with probability $p$. Let $T \subseteq D$ be the set of vertices of $D$ which do not have loops and do not have neighbors outside of $D$.  Observe that $D\sm T$ is a dominated set.
	
	Any given $v\in V(G)$ is in $T$ with probability 0 if it has a loop and otherwise with probability at most $p^{\del+1}$, as all its neighbors and itself must be selected. Thus by linearity of expectation we have \[\E[|D\sm T|]\ge (p-p^{\del+1})n\ge pn-e^{-(1-p)(\del+1)}n.\]  Taking $p=1-\log(\del+1)/(\del+1)$  gives a set of size at least $n-\f{1+\log(\del+1)}{\del+1}n$.
\end{proof}
This quickly gives an upper bound for the co-degrees of $C_\del$.
\begin{proof}[Proof of Lemma~\ref{lem:largecod}]
	Assume we have some pair of vertices $\{x,y\}$ and a set $S$ of size at least $(1 + 4\ep)t$ such that $\{x,y,u\}\in C_\del$ for all $u\in S$.  By definition of $C_\del$, this implies that $d_H(x,u),d_H(y,u)>\del$ for all $u\in S$.  By \eqref{eq:deg} and Lemma~\ref{lem:minDel}, we can find sets $D_x,D_y\sub S$ which are dominated in $L_x,L_y$ of size at least $(1-\ep)(1+4\ep)t$, and in particular $D:=D_x\cap D_y$ will be dominated in both and have size at least $(1-2\ep)(1+4\ep)t\ge t$,	where we use that $\ep \leq 1/4$ whenever $\delta \geq 14$.  Then $H$ contains a $K_{2,t}$ trace by Lemma~\ref{lem:simDom}, a contradiction.
\end{proof}

We next want to prove a bound when $L_x,L_y$ are only known to have minimum degree at least 1.  We first need the following simple result, where we recall that $G'\sub G$ is called a spanning subgraph if $V(G')=V(G)$.

\begin{lem}\label{lem:star}
	Let $G$ be a graph with loops and minimum degree at least 1.  Then there is a spanning subgraph $G'\sub G$ such that every connected component is either a vertex with a loop or a star with at least 2 vertices.
\end{lem}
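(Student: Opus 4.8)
The plan is to prove Lemma~\ref{lem:star} by a greedy/structural argument: I want to decompose the vertex set according to whether vertices carry loops, and then extract a spanning forest-of-stars on the loopless part. First I would handle the vertices with loops trivially: for each vertex $v$ that has a loop, I place $v$ together with a single loop edge into $G'$ as an isolated looped vertex, which is exactly one of the two allowed component types. This removes all looped vertices and it remains to build the star components out of the loopless vertices, each of which (since $\Del$-minimum degree is at least 1) has at least one non-loop edge in $G$.

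For the loopless part, the cleanest route is to take a maximal matching $M$ in $G$ restricted to the loopless vertices. Every loopless vertex $v$ has a genuine edge $uv$; I would argue that every loopless vertex is either matched by $M$ or adjacent to a matched vertex (otherwise $v$ and one of its neighbors could be added to $M$, contradicting maximality). So I can take each matching edge as the ``center edge'' of a prospective star and then attach every unmatched loopless vertex to one of its matched neighbors as a leaf. Concretely, for each edge $\{a,b\}\in M$ I create a star with $\{a,b\}$ as two of its vertices, and for each unmatched loopless vertex $w$ I pick one neighbor $w'$ that is an endpoint of some matching edge and add the edge $ww'$, assigning $w$ as a leaf to the star centered near $w'$.

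The remaining bookkeeping step is to make sure the resulting components really are stars, i.e.\ that I do not accidentally create a path or a larger tree when attaching leaves. The safe way is to choose, for each matching edge $\{a,b\}$, one of its two endpoints to serve as the unique center of that star (say $a$), and then attach every leaf to $a$ rather than to $b$; leaves that are only adjacent to $b$ can be reassigned by noting $b$ is itself a leaf of the star centered at $a$, so such a $w$ adjacent only to $b$ would then have to be attached to $b$, which would create a path. To avoid this, I would instead observe that it suffices to orient the argument around a maximal \emph{star forest}: take $G'$ to be an edge-maximal spanning subgraph of the loopless part subject to the constraint that every component is a star. Edge-maximality plus minimum degree at least 1 then forces every loopless vertex to lie in some nontrivial star, which gives exactly the claimed structure without any path arising.

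The main obstacle is precisely this last point---guaranteeing that the attachment process yields stars and not longer paths or trees, and that no loopless vertex is left isolated. The matching-based construction makes coverage easy but risks creating paths; the edge-maximal-star-forest formulation makes the star property automatic but then requires a short argument that minimum degree $\ge 1$ leaves no isolated loopless vertex. I expect the final write-up to favor the edge-maximal star forest phrasing, as it sidesteps the careful leaf-assignment casework entirely and reduces the whole lemma to two one-line observations.
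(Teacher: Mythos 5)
There are two genuine gaps here. First, your opening move of permanently assigning every looped vertex to be an isolated looped component, and then working only ``inside the loopless part,'' does not survive the following example: let $G$ have a loop at $u$ and a single further edge $uv$ with $v$ loopless. Both vertices have degree at least $1$, but once $u$ is frozen as a looped component, $v$ has no remaining neighbor and cannot be placed in any star. The paper's greedy proof explicitly allows a previously created looped component $\{u\}$ to be \emph{dissolved} and replaced by the star $vu$ when an otherwise stranded vertex $v$ turns up; your decomposition forbids exactly this repair.

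Second, the formulation you say you would favor in the final write-up --- take an edge-maximal star forest and claim that edge-maximality plus minimum degree $\ge 1$ forces every loopless vertex to be covered --- is false as stated. Take $G$ with edges $vu$, $uw$, $wz$, $wz'$ (no loops) and let $G'$ consist of the star centered at $w$ with leaves $u,z,z'$. The only absent edge is $vu$, and adding it creates a component in which both $u$ and $w$ have degree at least $2$, so $G'$ is edge-maximal; yet $v$ is isolated and loopless. To rescue the argument you must perform an exchange (delete $uw$, insert $uv$, noting $w$ retains another leaf), i.e.\ optimize a different quantity such as the number of covered vertices, and then run precisely the leaf-reassignment casework (leaf of a big star vs.\ leaf of a single-edge star vs.\ looped neighbor) that you were hoping to sidestep. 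That casework is the actual content of the paper's proof, and neither of your two sketches carries it out.
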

\begin{proof}
	
	
	We greedily build our subgraph. Suppose at step $i$, we have a subgraph with components $S_1, \ldots, S_{i-1}$ such that each component is either a star or a vertex with a loop. Let $V_{i-1}$ be the set of vertices covered by $S_1, \ldots, S_{i-1}$, and suppose there exists $v \in V(G) \setminus V_{i-1}$. If $v$ has a loop, we set $S_i = \{v\}$. Otherwise, let $S_i$ be the star with center $v$ and leaf vertices $N(v) \setminus V_{i-1}$. Then $S_i$ has at least 1 leaf unless $N_G(v) \setminus V_{i-1}$ is empty. In this case, for any $u \in N_G(v)$ we have $u \in S_j$ for some $j \leq i-1$. If $S_j = \{u\}$, that is, $u$ has a loop, remove $S_j$ and let $S_i = vu$. So suppose $S_j$ is a star. Note that $u$ is not the center, otherwise $v$ would also be in $S_j$. If $S_j$ has at least two leaves, then we replace it with the star $S_j \setminus \{u\}$ and let $S_i = vu$. Otherwise $S_j$ is a single edge, say $wu$. Then we remove $S_j$ and let $S_i$ be the star with edges $uv, uw$. 
\end{proof}
With this we can prove the following.
\begin{lem}\label{lem:min1}
	Let $G_x,G_y$ be graphs on $S$ with minimum degree at least 1.  Then there exists a set $D$ which is dominated in both $G_x$ and $G_y$ of size at least $|S|/3$.  Moreover, if $|S|=3$, then one can find such a set with $|D|=2$ unless $G_x\cup G_y$ is a $K_3$ (possibly with loops).
\end{lem}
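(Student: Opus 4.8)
The plan is to reduce the problem to finding a large independent set in an auxiliary graph assembled from star forests, and then to prove a degeneracy bound for unions of star forests.

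First I would apply Lemma~\ref{lem:star} to each of $G_x$ and $G_y$ (both have minimum degree at least $1$) to obtain spanning subgraphs $F_x\sub G_x$ and $F_y\sub G_y$ whose components are either looped singletons or stars on at least two vertices. Let $U$ be the graph on $S$ whose edges are the non-loop edges of $F_x$ together with those of $F_y$, so that $U$ is the union of two star forests (the looped singletons contribute isolated vertices to $U$). The key observation is that \emph{any} independent set $D$ in $U$ is simultaneously dominated in $G_x$ and in $G_y$. Indeed, if $v\in D$ is a looped singleton of $F_x$ it is automatically dominated in $G_x$; and if $v$ lies in a star of $F_x$, then since $D$ is independent none of its $F_x$-neighbours lie in $D$, so $v$ has a neighbour outside $D$ (its centre if $v$ is a leaf, any of its leaves if $v$ is the centre). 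The same reasoning applies to $G_y$. Hence it suffices to prove $\al(U)\ge |S|/3$.

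The heart of the argument is the claim that if $F_1,F_2$ are star forests on a common vertex set $W$, then $F_1\cup F_2$ has a vertex of degree at most $2$. Suppose not, so every vertex has degree at least $3$. Then for each $v$ we have $\deg_{F_1}(v)+\deg_{F_2}(v)\ge 3$, so $v$ has degree at least $2$ in $F_1$ or in $F_2$; that is, $v$ is the centre of a nontrivial star in at least one of the two forests. Let $A$ and $B$ be the sets of such centres in $F_1$ and in $F_2$ respectively, so that $A\cup B=W$. The stars of $F_1$ centred at $A$ are vertex-disjoint, and each consists of its centre together with at least two leaves, all distinct; hence $|W|\ge 3|A|$, and symmetrically $|W|\ge 3|B|$. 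But then $|W|=|A\cup B|\le |A|+|B|\le \tfrac{2}{3}|W|$, a contradiction. Since every induced subgraph of $U$ is again a union of two star forests, this claim shows that $U$ is $2$-degenerate; therefore $U$ is $3$-colourable, and its largest colour class is an independent set of size at least $|S|/3$, which supplies the required $D$.

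For the moreover statement, suppose $|S|=3$ and $G_x\cup G_y$ is not $K_3$. Then some pair $\{u,v\}$ is a non-edge of both $G_x$ and $G_y$, hence a non-edge of $U\sub G_x\cup G_y$, so $\{u,v\}$ is an independent set of $U$; by the observation above $D=\{u,v\}$ is dominated in both graphs and has size $2$. I expect the main obstacle to be precisely the degeneracy claim for a union of two star forests: the naive edge count ($|E(U)|\le 2|S|-2$) only yields $\al(U)\gtrsim|S|/5$, and the digraph/functional-graph viewpoint is likewise too lossy, so one must use the fact that the conflict graph decomposes into two genuine star forests (so that the ``heavy'' centres of each forest already account for a $\tfrac13$ fraction of vertices). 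Getting this structural counting exactly right, and verifying that the independent-set condition really captures domination in the presence of loops, are the two points that require care.
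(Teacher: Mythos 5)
Your proof is correct, and while it starts from the same place as the paper's (both apply Lemma~\ref{lem:star} to extract spanning star forests from $G_x$ and $G_y$, and both ultimately output a set $D$ that is independent in the union of the two forests, which is exactly what makes $D$ dominated in each graph), the mechanism for bounding $|D|$ is genuinely different. The paper takes $D=S\sm(C_x\cup C_y)$, the complement of the set of star centres, and bounds its size by a direct charging argument (each vertex kept in $D$ accounts for at most two discarded centres). You instead prove that the union of two star forests is $2$-degenerate: every nonempty induced subgraph has a vertex of degree at most $2$, since a vertex of degree at least $3$ must have degree at least $2$ in one of the forests, i.e.\ be the centre of a star with at least two leaves, and such centres occupy at most a third of the vertices in each forest. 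Hence the union is $3$-colourable and a largest colour class gives $|D|\ge |S|/3$. Your route costs an extra structural lemma but buys some robustness: the paper's specific set $S\sm(C_x\cup C_y)$ can degenerate in small configurations (e.g.\ two vertices each chosen as the centre of the other's star leaves the complement empty), whereas a maximum independent set of the union always meets the $|S|/3$ bound; your version also makes explicit that \emph{any} independent set of the union works. The treatments of the $|S|=3$ ``moreover'' clause are essentially identical in both arguments: a common non-edge is an independent pair in the union, hence dominated in both graphs.
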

\begin{proof}
	Let $G'_x\sub G_x,\ G'_y\sub G_y$ be the subgraphs guaranteed by Lemma~\ref{lem:star}.  Let $C_x$ consist of the centers of stars of order at least 2 in $G_x'$, where a center of a star of order 2 is chosen arbitrarily.  Similarly define $C_y$ and set $D=S\sm (C_x\cup C_y)$.  Note that by assumption on $G_x'$, every $u\in D\sub S\sm C_x$ either has a loop or is adjacent to something in $C_x$.  The same holds for $G_y'$, so $D$ is dominated in both graphs and hence also in $G_x,G_y$. 
	
	It remains to bound the size of $D$. Observe that every $u\in D$ is adjacent to at most one vertex in each of $G_x',G_y'$ (namely the center of the star it's in).  Thus for each vertex added to $D$ we omitted at most two vertices from $D$, giving the first bound.  If, say, $S=\{u,v,w\}$ and $uv\notin G_x\cup G_y$, then by the minimum degree conditions, each of $u,v$ must be adjacent to either $w$ or have a loop in $G_x,G_y$.  Thus $D=\{u,v\}$ is a dominated set.
\end{proof}

We now prove Lemmas~\ref{lem:cod} and~\ref{lem:medium}.
\begin{proof}[Proof of Lemma~\ref{lem:cod}]
	Assume there exists a set $S$ of size at least $3t-2$ and a pair $\{x,y\}$ such that $\{x,y,u\}\in E(H)\sm A$ for all $u\in S$.  By definition of $A$, this implies that $d_H(x,u),d_H(y,u)\ge 2$ for all $u\in S$.  Thus $L_x=L_x(H,S,y)$ and $L_y=L_y(H,S,x)$ have minimum degree at least 1 by \eqref{eq:deg}, so by Lemma~\ref{lem:min1} we can find a set $D\sub S$ which is simultaneously dominated in $L_x,L_y$ of size at least $\ceil{|S|/3}\ge t$. Then $H$ contains a $K_{2,t}$ trace by Lemma~\ref{lem:simDom}, a contradiction.
	
	For $t=2$, if there exists such an $S=\{u,v,w\}$, then by Lemma~\ref{lem:min1} we can assume $L_x\cup L_y$ is a $K_3$, and without loss of generality we can assume $uv,uw\in L_x$.  By definition this implies that $\{x,u,v\},\{x,u,w\}\in E(H)$.  By definition of $S$ there exist edges $\{x,y,v\},\{x,y,w\}\in E(H)$.  These four edges form a $C_4$ trace on $\{y,u,v,w\}$, which is a contradiction.
\end{proof}

\begin{proof}[Proof of Lemma~\ref{lem:medium}]
Recall that $B_\delta$ is the set of edges of $H\setminus A$ that contain a pair with co-degree at most $\delta$. Let $G$ be the graph on $[n]$ whose edge set is obtained from $B_\delta$ by taking from each $e \in B_\delta$ a pair of vertices with co-degree at most $\delta$ in $H\setminus A$. Observe that $e(B_\delta)\le \delta \cdot e(G)$ as each edge in $G$ is mapped to by at most $\delta$ edges of $H$.  We claim that $G$ is $K_{2,r}$-free with $r=k+3t-2$, which will give the stated bound by Theorem~\ref{thm:graphK2t}.  Indeed, assume for contradiction that $G$ contained such a $K_{2,r}$ on $\{x,y\} \cup \{u_1,\ldots,u_{r}\}$.  Let $S$ be the set of vertices $u_i$ of this $K_{2,r}$ for which $\{x,y,u_i\}\notin E(H)$, and by assumption there are at least $r-k=3t-2$ such vertices.  By definition of $S$, every vertex in $L_x,L_y$ has degree at least 1, so by Lemma~\ref{lem:min1} we can find a set of size at least $t$ that is dominated in $L_x,L_y$, giving a $K_{2,t}$ trace by Lemma~\ref{lem:simDom} which is a contradiction.
\end{proof}

\section{Proof of Lemma~\ref{lem:large}}\label{sec:large}

Our proof of Lemma~\ref{lem:large} involving hypergraphs with co-degrees at most $k$ will be an adaptation of a proof in~\cite{GMV} concerning linear hypergraphs. Throughout this section, unless stated otherwise we will assume to be working in $C_\delta$, which we recall is the set of edges in $H$ in which every pair has co-degree greater than $\delta$ in $H$.  For ease of notation we let $d(v) = d_{C_\delta}(v)$. We now begin the formal proof.

For $v\in V(C_\delta)$, define the 1 and 2-neighborhood of $v$ as \[N_1(v)=\{x\in V:\exists e\in E(C_\delta),\ v,x\in e\}.\]\[N_2(v)=\{x\in V(C_\delta)\sm (N_1(v)\cup \{v\}):\exists h\in E(C_\delta),\ x\in e,\ e\cap N_1(v)\ne \emptyset\}.\]
That is, $N_i(v)$ is the set of vertices that are distance $i$ from $v$. 

First observe that if $E$ is a set of edges containing some vertex $v$ and $V$ is the set of vertices $u\ne v$ with $u\in e$ for some $e\in E$, then 
\begin{equation}\label{lem:neigh}|V|\ge \f{2}{k}|E|,
\end{equation}
as each vertex in $V$ is contained in at most $k$ edges with $v$.

\begin{lem}\label{lem:expan}	For any $x\in V(C_\del)$ and $y\in N_1(x)$, the number of edges $e\in E(H)$ containing $y$ with $|e\cap N_1(x)|\ge 2$ is less than $k+\half k \cdot 50t$.
\end{lem}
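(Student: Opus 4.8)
The plan is to bound the set $\mathcal E:=\{e\in E(H):y\in e,\ |e\cap N_1(x)|\ge 2\}$ by combining a $K_{2,t}$-trace argument, which controls the number of \emph{vertices} that can interact with $y$, with the co-degree bound $k$ that converts a vertex count into an edge count. Since $y\in N_1(x)$ already, every $e\in\mathcal E$ contains a further vertex $w\in N_1(x)\setminus\{y\}$; I will call such a $w$ a \emph{witness} of $e$ and let $W$ be the set of all witnesses. The first step is to separate off the degenerate edges, namely those of the form $\{x,y,w\}$ that contain the fixed pair $\{x,y\}$: all of these share the pair $\{x,y\}$, so their number is controlled by the co-degree of $\{x,y\}$, and this contribution is what should account for the additive term $k$ in the stated bound. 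Every remaining edge of $\mathcal E$ then has a witness $w$ together with a third vertex that is \emph{not} equal to $x$.

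The heart of the argument is a trace argument with centers $x$ and $y$ and candidate leaves drawn from $W$. I would form the graphs $L_x=L_x(H,S,y)$ and $L_y=L_y(H,S,x)$ on a suitable set $S\subseteq N_1(x)\setminus\{y\}$ of witnesses. For any $u\in S\subseteq N_1(x)$ we have $d_H(x,u)>\delta\ge 2$, so $d_{L_x}(u)\ge d_H(x,u)-1\ge 1$ by \eqref{eq:deg}; and by restricting $S$ to witnesses lying in at least one nondegenerate edge with $y$ (so $d_H(y,u)\ge 1$ via an edge avoiding $x$), we likewise get $d_{L_y}(u)\ge 1$. Thus both $L_x$ and $L_y$ have minimum degree at least $1$ on $S$, and Lemma~\ref{lem:min1} produces a set $D\subseteq S$ dominated in both of size at least $|S|/3$. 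If $|S|\ge 3t$ this gives a set of size $t$ dominated in $L_x$ and $L_y$ simultaneously, hence a $K_{2,t}$-trace by Lemma~\ref{lem:simDom}, a contradiction. Consequently $|W|=O(t)$ (the constant $50$ being a convenient overestimate, absorbing the split into edges with one versus two witnesses).

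Finally I would convert this vertex bound into the edge bound: each nondegenerate edge of $\mathcal E$ is counted by summing over its witnesses, so their number is at most $\sum_{w\in W}d_H(y,w)$, and using that each pair $\{y,w\}$ lies in at most $k$ edges (the co-degree hypothesis underlying Lemma~\ref{lem:large} and \eqref{lem:neigh}) this is at most $k\,|W|\le \tfrac12 k\cdot 50t$. Adding the at most $k$ degenerate edges gives the claimed $k+\tfrac12k\cdot 50t$. The main obstacle, and the step requiring the most care, is the bookkeeping around co-degrees: the lemma counts edges of the full hypergraph $H$, whereas the bound $k$ is a hypothesis about co-degrees in $C_\delta$, so one must argue that the relevant pairs $\{y,w\}$ (and the pair $\{x,y\}$) really do have $H$-co-degree controlled by $k$, or else isolate the edges outside $C_\delta$. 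The secondary delicate point is ensuring that Lemma~\ref{lem:min1}, which only guarantees a dominated set of size $|S|/3$ under a minimum-degree-$1$ hypothesis, is applied to a set $S$ large enough to force a dominated subset of size exactly $t$ while correctly excluding the degenerate edges through $\{x,y\}$ that do not contribute usable leaves.
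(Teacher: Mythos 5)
Your argument is correct, and it takes a genuinely different (and in one respect sharper) route than the paper's. The paper does not single out witnesses in $N_1(x)$: after discarding the at most $k$ edges through $\{x,y\}$, it sets $S=\bigcup_{e\in E}e\setminus\{y\}$ (all remaining vertices of the offending edges), uses \eqref{lem:neigh} to get $|S|\ge \frac{2}{k}|E|\ge 50t$, asserts that $L_x(H,S,y)$ and $L_y(H,S,x)$ have minimum degree at least $\delta\ge 14$, and applies the probabilistic Lemma~\ref{lem:minDel} twice to obtain dominated sets of size at least $.51|S|$ whose intersection has size at least $.02|S|\ge t$. You instead take $S$ to be the set $W$ of witnesses $w\in N_1(x)\setminus\{y\}$ of the nondegenerate edges, for which the minimum degree hypotheses are automatic and easy to verify (degree at least $\delta-1\ge 1$ in $L_x$ because $w\in N_1(x)$, and degree at least $1$ in $L_y$ via the witnessed edge, which avoids $x$), and you invoke the weaker Lemma~\ref{lem:min1} to extract a simultaneously dominated set of size $|W|/3$; this caps $|W|$ at $3t-1$ and gives the bound $k+k(3t-1)$, which is stronger than the stated $k+25kt$ and uses only $\delta\ge 2$ in this step. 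The co-degree bookkeeping you flag as the main obstacle is real but is equally present in the paper's own proof: the lemma is phrased for edges of $E(H)$ while $k$ only bounds co-degrees in $C_\delta$. The intended reading, and the only one needed for the application in Lemma~\ref{lem:large}, is to count edges of $C_\delta$; with that convention both your ``at most $k$ degenerate edges'' step and the estimate $\sum_{w\in W}d(y,w)\le k|W|$ go through, and your outline is complete.
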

\begin{proof}
Assume this was not the case for some $x,y$.  Note that at most $k$ of these edges contain $x$ since $\{x,y\}$ has co-degree at most $k$, so there exists a set of $\half k \cdot 50t$ of these edges $E$ which do not contain $v$.  Let $S=\bigcup_{e\in E} e\sm \{y\}$, and by~\eqref{lem:neigh} we have that $|S|\ge 50t$.  

In the language of the previous section, we define $L_x=L_x(H,S,y)$ and $L_y=L_y(H,S,x)$.  By definition of $C_\del$ and \eqref{eq:deg} these graphs have minimum degree at least $\del\ge 14$.  By Lemma~\ref{lem:minDel} we can find dominated sets $D_x,D_y$ of size at least $(1-\epsilon_\delta)|S| \geq .51|S|$, and thus $D=D_x\cap D_y$ is a set dominated in both $L_x,L_y$ of size at least $.02|S|\ge t$.  This implies that $H$ contains a $K_{2,t}$ trace by Lemma~\ref{lem:simDom}, a contradiction. 
\end{proof}

We point out that the above bound can be further optimized, however such improvements will not affect our asymptotic result.

From now on we fix some $v\in V(C_\delta)$.  For $u\in N_1(v)$, define \[E_u=\{e\in E(C_\del):e\cap N_1(v)=\{u\}\},\hspace{2em} V_u=\{w\in N_2(v):\exists e\in E_u,\ w\in e\}.\]

\begin{lem}\label{lem:Vi}
	\[\sum_{u \in N_1(v)} |V_u|\le ((1+4\epsilon)t-1)n.\]
\end{lem}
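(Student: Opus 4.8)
The plan is to bound the sum by reversing the order of summation. Writing $\sum_{u\in N_1(v)}|V_u|$ as a count of pairs $(u,w)$ with $u\in N_1(v)$ and $w\in V_u$, I would group by the $N_2(v)$-vertex: for each $w\in N_2(v)$ set $U_w=\{u\in N_1(v):w\in V_u\}$, so that $\sum_{u\in N_1(v)}|V_u|=\sum_{w\in N_2(v)}|U_w|$. Since $N_2(v)\sub V(C_\del)\sub [n]$ gives $|N_2(v)|\le n$, it suffices to prove the pointwise bound $|U_w|\le (1+4\ep)t-1$ for every $w\in N_2(v)$; summing this over the at most $n$ choices of $w$ then yields the claim.

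To prove the pointwise bound I would adapt the argument used for Lemma~\ref{lem:largecod}, now treating $v$ and $w$ as the two hub vertices of a potential $K_{2,t}$ trace. First observe that for each $u\in U_w$ both pairs $\{v,u\}$ and $\{w,u\}$ lie in an edge of $C_\del$: since $u\in N_1(v)$ there is an edge of $C_\del$ containing both $v$ and $u$, and since $w\in V_u$ there is an edge $e\in E_u$ with $w\in e$, which (as $e\cap N_1(v)=\{u\}$) has the form $\{u,w,z\}\in C_\del$. By the definition of $C_\del$ this forces $d_H(v,u)>\del$ and $d_H(w,u)>\del$ for all $u\in U_w$. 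Now suppose for contradiction that $|U_w|\ge (1+4\ep)t$ and set $S=U_w$. Forming $L_v=L_v(H,S,w)$ and $L_w=L_w(H,S,v)$, inequality~\eqref{eq:deg} together with the two co-degree bounds shows both graphs have minimum degree at least $\del\ge 14$. Lemma~\ref{lem:minDel} then yields dominated sets $D_v,D_w\sub S$ each of size at least $(1-\ep)|S|$, whence $D:=D_v\cap D_w$ is dominated in both and has size at least $(1-2\ep)(1+4\ep)t\ge t$, using $\ep\le 1/4$ for $\del\ge 14$. By Lemma~\ref{lem:simDom} this produces a $K_{2,t}$ trace in $H$, a contradiction, so $|U_w|\le (1+4\ep)t-1$ as desired.

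The step I expect to require the most care is verifying that the domination machinery genuinely applies to the pair $(v,w)$. One must check that $v,w\notin S$ (immediate, since $S\sub N_1(v)$ is disjoint from both $\{v\}$ and from $N_2(v)\ni w$), and, more importantly, that the edges witnessing $u\in U_w$ really supply the high co-degree of the pair $\{w,u\}$ and not merely of $\{v,u\}$. This is exactly where the structure of $E_u$ is used: the membership $w\in V_u$ forces an edge of $C_\del$ through $\{w,u\}$, which is what lets $v$ and $w$ play symmetric roles even though they need not be adjacent. The remaining estimate $(1-2\ep)(1+4\ep)\ge 1$ is the same computation already carried out in the proof of Lemma~\ref{lem:largecod}.
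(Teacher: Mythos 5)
Your proposal is correct and is essentially the paper's own argument: the paper derives the same pointwise fact via pigeonhole (assuming the sum exceeds $((1+4\ep)t-1)n$, some vertex $x\notin N_1(v)$ lies in at least $(1+4\ep)t$ of the sets $V_u$), which is just the contrapositive of your bound $|U_w|\le (1+4\ep)t-1$ summed over at most $n$ vertices. The trace-finding step — noting both $d_H(v,u)>\del$ and $d_H(w,u)>\del$ for $u\in U_w$, applying \eqref{eq:deg}, Lemma~\ref{lem:minDel}, and Lemma~\ref{lem:simDom} — matches the paper's proof exactly.
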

\begin{proof} Suppose for contradiction that $\sum |V_u| >((1+4\epsilon)t-1)n$. By the pigeonhole principle, there exists a vertex $x \notin N_1(v)$ and a set $S\sub N_1(v)$ of size at least $(1+4\ep)t$ such that $x\in V_u$ for all $u\in S$.  Define $L_v=L_v(H,S,x)$ and $L_x=L_x(H,S,v)$.  By assumption every $u\in S$ is contained in an edge $\{u,v,w_v\},\{u,x,w_x\}\in E(C_\del)$, so by \eqref{eq:deg} these graphs have minimum degree at least $\del$.  By Lemma~\ref{lem:minDel} we can find a set $D$ which is dominated in both of these graphs with size at least $(1-2\ep)(1+4\ep)t\ge t$ for $\del\ge 14$.  By Lemma~\ref{lem:simDom} we conclude that $H$ contains a $K_{2,t}$ trace, a contradiction.
\end{proof}

By Lemma~\ref{lem:expan} we have \[|E_u|\ge d(u)-k-\half k \cdot 50t +1\ge d(u)-26kt,\] and by \eqref{lem:neigh} we have $|V_u|\ge \f{2}{k}|E_u|$,  therefore
\begin{equation}\label{eq:Vu}
d(u)\le \f{k}{2}|V_u|+26kt.
\end{equation}
By Lemma~\ref{lem:Vi} and~\eqref{lem:neigh},
\[
\sum_{u \in N_1(v)} d(u) \leq \sum_{u \in N_1(v)} \l(\frac{k}{2} |V_u| + 26kt\r) \leq \frac{k}{2}\left(1 + 4\ep\right)tn + \frac{k}{2}d(v)\cdot 26kt.
\]

	Let $d=3e(C_\del)/n$ denote the average degree of $C_\del$. Then summing over the above inequality gives
\begin{equation}\label{eq:bound}
\sum_{v \in V(C_\del)} \sum_{u \in N_1(v)} d(u) \leq \frac{k}{2}\left(1 + 4\ep\right)tn^2 + 13k^2 t \cdot dn.
\end{equation}
	
	 	On the other hand, because $|N_1(u)|\ge \f{2}{k}d(u)$ by \eqref{lem:neigh}, and because $u\in N_1(v)$ if and only if $v\in N_1(u)$, we can reverse the sum to get
	 	
	 	\begin{equation}\label{eq:bound2}\sum_{u\in V(C_\del)}\sum_{v\in N_1(u)}d(u)\ge \sum_{u\in V(C_\del)} \f{2}{k}d(u)^2 \ge \f{2}{k}d^2n,
	 	\end{equation}
	with the last step following from the Cauchy-Schwarz inequality.  By combining \eqref{eq:bound} and \eqref{eq:bound2}, we find with $b:=\frac{13}{2}k^3t $ and $c:= \frac{k^2}{4}\left(1 +4\ep\right)t$ that 
	\[d^2-bd-cn\le 0\implies d\le \f{b+\sqrt{b^2+4cn}}{2}=\sqrt{c}n^{1/2}+O(1)\le \half k^{3/2},\]
	where this last step used $k\ge (1+4\ep)t$. Thus \[e(C_\del)=\f{dn}{3} \leq \frac{1}{6}k^{3/2}n^{3/2}+O(n),\]  giving the desired bound.

\section{Proof of Theorem~\ref{c4}}\label{sectionc4}
In this section we refine our methods and prove Theorem~\ref{c4} for forbidden $C_4$ traces. As many ideas are carried over from the proof of Theorem~\ref{k2t}, we omit some of the redundant details.  We note that the lower bound of Theorem~\ref{c4} follows from Theorem~\ref{thm:old}, so it remains to prove the upper bound.

Let $H$ be an $n$-vertex, $3$-uniform hypergraph with no $C_4$ trace. Let $A = H_1^-$, i.e., the edges with at least one pair of co-degree 1, and $B = H \setminus A$. 
Let $G_A$ be a graph on $[n]$ whose edge set is obtained by adding a pair of co-degree 1 from every edge of $A$. Then $G_A$ is $C_4$-free and we have \[|A| \leq ex(n, C_4) \le \half n^{3/2}+o(n^{3/2}).\]

It remains to show that $|B| \leq \frac{1}{3} n^{3/2} + o(n^{3/2})$. From now on we write $d_B(v),d_B(u,v)$ as $d(v),d(u,v)$. By Lemma~\ref{lem:cod}, $d(x,y) \leq 2$ for all $\{x,y\} \subset V(H)$. Similar to the proof of Lemma~\ref{lem:large}, for any vertex $v$ we let $N_1(v)$ and $N_2(v)$ denote the $1$- and $2$- neighborhoods of $v$ in $B$, respectively.

\begin{lem}\label{c4intersect}For any $x,y \in V(H)$, $|N_1(x)\cap N_1(y)|\leq 7.$\end{lem}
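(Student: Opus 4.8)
The plan is to bound $|N_1(x) \cap N_1(y)|$ by showing that too many common neighbors would force a $C_4$ trace, using the co-degree bound $d(u,v) \le 2$ in $B$ together with the dominated-set machinery of Lemma~\ref{lem:simDom}. Let $W = N_1(x) \cap N_1(y)$. Each $w \in W$ lies in some edge of $B$ with $x$ and some edge with $y$; the idea is that these edges essentially behave like the edges witnessing membership in the 1-neighborhoods, and we want to extract from $W$ a dominated structure on the pair $\{x,y\}$.

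First I would set up the auxiliary graphs. For each $w \in W$, since $w \in N_1(x)$ there is an edge $\{x,w,a\} \in B$, and since $w \in N_1(y)$ there is an edge $\{y,w,b\} \in B$. I would pass to a subset $S \subseteq W$ (discarding a bounded number of exceptional vertices) so that the witnessing edges avoid collisions with $\{x,y\}$ — in particular discarding any $w$ for which $\{x,y,w\} \in E(H)$, since by the $C_4$-trace-freeness and $d(x,y) \le 2$ there can be at most $2$ such $w$. On the remaining set I would build $L_x = L_x(H, S, y)$ and $L_y = L_y(H, S, x)$ as in Section~\ref{sec:dom}. By \eqref{eq:deg} and the fact that each $w \in S \subseteq N_1(x) \cap N_1(y)$ has $d_H(x,w), d_H(y,w) \ge 1$ coming from a non-$\{x,y\}$ edge, these graphs have minimum degree at least $1$.

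Next I would invoke Lemma~\ref{lem:min1}: any two minimum-degree-$\ge 1$ graphs on $S$ admit a common dominated set of size at least $|S|/3$, and if $|S| = 3$ one gets size $2$ unless $L_x \cup L_y$ is a $K_3$. Since a $C_4$ trace needs a dominated set of size $t = 2$, the goal is to guarantee such a set, which by Lemma~\ref{lem:simDom} yields the forbidden trace. So the real content is a careful accounting: starting from $|W| \ge 8$, after removing the at most $2$ vertices $w$ with $\{x,y,w\} \in E(H)$ and handling the possible $K_3$ obstruction, I must still be able to extract a dominated set of size $2$. I expect the main obstacle to be exactly this exceptional-case bookkeeping — ensuring the constant $7$ is sharp given the $K_3$ exception in Lemma~\ref{lem:min1} and the co-degree-$2$ edges. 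The strategy would be to show that if $|W| \ge 8$ then $|S| \ge 3$, and that the $K_3$ obstruction and the $d(x,y)\le 2$ edges cannot simultaneously block every candidate pair; if the generic bound $|S|/3$ is too weak near the threshold, I would argue directly on small configurations, using that a $K_3$ in $L_x \cup L_y$ together with the $y$-side edges produces a $C_4$ trace on $\{x,y\}$ plus two of the common neighbors.
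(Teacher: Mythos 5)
Your route is viable and genuinely different from the paper's. The paper argues directly: take eight common neighbors, discard the at most two $u_i$ with $\{x,y,u_i\}\in B$, and on the remaining six build the auxiliary graph $G$ with $ij\in E(G)$ iff $\{x,u_i,u_j\}\in B$ or $\{y,u_i,u_j\}\in B$; since co-degrees in $B$ are at most $2$ this gives $\Delta(G)\le 4$, so some pair $\{u_1,u_2\}$ is non-adjacent, and the four witnessing edges $e_{x,1},e_{x,2},e_{y,1},e_{y,2}$ then meet $\{x,y,u_1,u_2\}$ in exactly the edges of a $C_4$. Your plan instead runs the section's general machinery (Lemma~\ref{lem:min1} together with Lemma~\ref{lem:simDom}), and the arithmetic closes without any of the exceptional-case analysis you were worried about: from $|W|\ge 8$ you discard at most $2$ vertices, so $|S|\ge 6$, and Lemma~\ref{lem:min1} already yields a common dominated set of size at least $|S|/3=2$; the $K_3$ exception is only relevant when $|S|=3$ and never enters. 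The paper's version is self-contained and explicit; yours is shorter modulo the earlier lemmas and makes transparent that the constant $7$ is exactly what the $|S|/3$ bound forces.

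One correction to your bookkeeping is needed. The set of $w$ with $\{x,y,w\}\in E(H)$ has size $d_H(x,y)$, which is \emph{not} bounded by $2$: Lemma~\ref{lem:cod} only bounds co-degrees in $H\setminus A=B$, and there may be many edges $\{x,y,w\}\in A$. You should discard only the at most two vertices $w$ with $\{x,y,w\}\in B$. For any other $w\in W$, membership in $N_1(x)$ is witnessed by an edge of $B$ containing $\{x,w\}$, and that edge cannot be $\{x,y,w\}$ (which, if present, lies in $A$, and $A\cap B=\emptyset$); so its third vertex differs from $y$ and it contributes degree to $w$ in $L_x(H,S,y)$, and symmetrically for $L_y$. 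Hence minimum degree $1$ holds on all of $S=W\setminus\{w:\{x,y,w\}\in B\}$ and $|S|\ge |W|-2$, which is what your count requires.
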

\begin{proof}Suppose there exists $x,y \in V(H)$ and some set $\{u_1, \ldots, u_8\} \subseteq N_1(x) \cap N_1(y)$. At most two $u_i$'s, say $u_7$ and $u_8$, are in edges of the form $\{x,y,u_i\}\in B$. Let $G$ be a graph on $[6]$ where $ij \in E(G)$ if and only if either $\{x,u_i, u_j\} \in B$ or $\{y, u_i, u_j\} \in B$. Because pairs in $B$ have co-degree at most 2, we have $\Delta(G) \leq 4$. In particular, there exists a non-adjacent pair, say $\{1,2\}$.  Let $e_{x,1}$ be any edge of $B$ containing $\{x,u_1\}$, and note that $y,u_2 \notin e_{x,1}$. Similarly define $e_{x,2},e_{y,1},e_{y_2}$. Then these four edges form a $C_4$ trace in $B$, which is a contradiction.
\end{proof} 

Now fix any vertex $v\in V(H)$. As before, define $E_u =\{e \in B: e \cap N_1(v) =\{u\}\}$ and $V_u= \{w \in N_2(v): \exists e \in E_u, w \in e\}$.  Since $V_u \subseteq N_1(u)$ for all $u$, we have the following corollaries. 

\begin{cor}\label{cor:7}Let $e = \{v, u, w\} \in B$ be any edge containing $v$. Then $|V_u \cap V_w| \leq 7$. 
\end{cor}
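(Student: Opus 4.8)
Looking at the statement of Corollary~\ref{cor:7}, I need to prove that for an edge $e = \{v,u,w\} \in B$, we have $|V_u \cap V_w| \leq 7$.

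The plan is to observe that this follows almost immediately from Lemma~\ref{c4intersect} by the remark preceding the corollary. The key observation is the stated fact that $V_u \subseteq N_1(u)$ for all $u$. This containment holds by definition: any vertex in $V_u$ lies in some edge of $E_u$ together with $u$ (since every $e \in E_u$ satisfies $u \in e$, and $V_u$ collects the \emph{other} vertices of such edges lying in $N_2(v)$), so such a vertex is a $1$-neighbor of $u$ in $B$.

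Given this containment, I would argue as follows. Since $V_u \subseteq N_1(u)$ and $V_w \subseteq N_1(w)$, we have
\[
V_u \cap V_w \subseteq N_1(u) \cap N_1(w).
\]
Now $u$ and $w$ are two distinct vertices of $V(H)$ (they are two of the three vertices of the edge $e$), so Lemma~\ref{c4intersect} applies with $x = u$ and $y = w$, giving $|N_1(u) \cap N_1(w)| \leq 7$. Combining with the inclusion above yields $|V_u \cap V_w| \leq 7$, as desired.

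Since this is a direct corollary, I do not expect any real obstacle; the entire content is the set inclusion $V_u \subseteq N_1(u)$ together with a single application of Lemma~\ref{c4intersect}. The only point requiring a moment of care is confirming that $u \neq w$, which is immediate since $\{v,u,w\}$ is a $3$-element edge, and that one is genuinely allowed to take $x=u,\ y=w$ in Lemma~\ref{c4intersect} (the lemma is stated for arbitrary $x,y \in V(H)$, so this is fine).
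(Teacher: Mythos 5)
Your proof is correct and is exactly the argument the paper intends: the text immediately preceding the corollary notes that $V_u \subseteq N_1(u)$ for all $u$, and the bound then follows from Lemma~\ref{c4intersect} applied to the pair $u,w$, just as you wrote.
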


\begin{cor}\label{2intersect}
	For all $u\in N_1(v)$, \[|V_u|\ge d(u)-16\]
\end{cor}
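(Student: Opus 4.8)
The plan is to show that almost every edge of $B$ through $u$ contributes a fresh vertex to $V_u$, so that $|V_u|$ can fall short of $d(u)$ only by the handful of edges that point back toward $N_1(v)$. Throughout I would use that $B=H\sm A$ has all co-degrees at most $2$, which is exactly Lemma~\ref{lem:cod} in the case $t=2$.

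First I would partition the $d(u)$ edges of $B$ through $u$ into three classes: (i) edges $\{u,v,w\}$ through $v$; (ii) edges $\{u,a,b\}$ with $v\notin\{a,b\}$ but $\{a,b\}\cap N_1(v)\ne\emptyset$; and (iii) the remaining edges, which are precisely the members of $E_u$. Indeed, an edge through $u$ that avoids $v$ lies in $E_u$ if and only if its other two vertices avoid $N_1(v)$, and in that case both of them lie in $N_2(v)$ and hence in $V_u$; moreover an edge containing $v$ always meets $N_1(v)$ in a second vertex, so it can never belong to $E_u$. Thus the three classes are disjoint and exhaust all edges through $u$, giving $d(u)=|(\mathrm{i})|+|(\mathrm{ii})|+|E_u|$.

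The core inequality $|V_u|\ge|E_u|$ is then immediate from~\eqref{lem:neigh}: taking the edge set there to be $E_u$ (all edges containing $u$) and noting that each pair $\{u,w\}$ has co-degree at most $k=2$ yields $|V_u|\ge\f{2}{k}|E_u|=|E_u|$. The crucial point is that $k=2$ makes this constant exactly $1$, so no vertices are lost in this step.

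Finally I would bound classes (i) and (ii). Class (i) contains at most $d(u,v)\le 2$ edges. Every edge of class (ii) uses a vertex of $N_1(u)\cap N_1(v)$, and Lemma~\ref{c4intersect} gives $|N_1(u)\cap N_1(v)|\le 7$; since each pair $\{u,w\}$ lies in at most $2$ edges, class (ii) contributes at most $14$. Hence $|E_u|=d(u)-|(\mathrm{i})|-|(\mathrm{ii})|\ge d(u)-16$, and combining with $|V_u|\ge|E_u|$ gives the claim. No step is genuinely difficult; the only thing to get right is the bookkeeping of exactly which edges through $u$ fail to deposit a vertex in $V_u$, namely those meeting $N_1(v)$ in a second vertex, whose number is controlled by the intersection bound of Lemma~\ref{c4intersect}.
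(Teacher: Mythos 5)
Your proof is correct and follows essentially the same route as the paper: both arguments show $|E_u|\ge d(u)-16$ by discarding the at most $2$ edges through $v$ and the at most $7\cdot 2=14$ edges meeting $N_1(v)$ in a second vertex (via Lemma~\ref{c4intersect} and the co-degree bound $2$), and then conclude $|V_u|\ge|E_u|$ from \eqref{lem:neigh} with $k=2$. The explicit three-class partition is just a more verbose bookkeeping of the same count.
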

\begin{proof}
	Note that $E_u$ consists of every edge containing $u$ except the at most 2 edges also containing $v$ and the edges $\{e\in B: u\in e, |e \cap N_1(v)|\geq 2\}$.  We claim that this latter set has cardinality at most 14.  Indeed, any such edge would contribute a vertex to $N_1(u) \cap N_1(v)$, of which there are at most 7 vertices by the previous lemma. Each such vertex can be contained in at most 2 edges with $u$ because $B$ has maximum co-degree at most 2.
	
	We conclude that $|E_u|\ge d(u)-16$, and because $B$ has maximum co-degree at most 2, $|V_u|\ge |E_u|$ by \eqref{lem:neigh}, giving the desired result.
\end{proof}

\begin{lem}\label{lem:Vi4}
	$\sum_{u \in N_1(v)} |V_u|\le n + 14d(v)$.
\end{lem}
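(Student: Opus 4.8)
The plan is to recast the left-hand side as a sum over the second neighbourhood and control the overcounting by a $C_4$-trace argument. For $w \in N_2(v)$ set $m(w) = |\{u \in N_1(v) : w \in V_u\}|$, so that $\sum_{u \in N_1(v)} |V_u| = \sum_{w \in N_2(v)} m(w)$. Every $w$ contributing to this sum lies in $N_2(v) \sub V(H)$, so there are at most $n$ of them, each contributing a base count of $1$; peeling this off gives $\sum_{u} |V_u| \le n + \sum_{w : m(w) \ge 2} (m(w) - 1)$. Since $m-1 \le \binom{m}{2}$ for every $m \ge 1$, it suffices to prove $\sum_{w} \binom{m(w)}{2} \le 14 d(v)$; in fact the argument below yields the stronger bound $7 d(v)$.

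The double-counting identity $\sum_{w} \binom{m(w)}{2} = \sum_{\{u_1,u_2\} \sub N_1(v)} |V_{u_1} \cap V_{u_2}|$ reduces the problem to the pairwise intersections $V_{u_1} \cap V_{u_2}$. The key structural claim, which I expect to be the main obstacle, is: \emph{if $u_1 \ne u_2$ lie in $N_1(v)$ and $V_{u_1} \cap V_{u_2} \ne \emptyset$, then $\{v,u_1,u_2\} \in B$.} Granting this, only pairs $\{u_1,u_2\}$ spanning an edge of $B$ through $v$ contribute to the sum. There are exactly $d(v)$ such pairs (each edge $\{v,u_1,u_2\}\in B$ determines one, injectively), and for each of them Corollary~\ref{cor:7} applies to give $|V_{u_1}\cap V_{u_2}| \le 7$. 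Hence $\sum_w \binom{m(w)}{2} \le 7 d(v)$, and the desired inequality follows.

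To prove the structural claim I would fix $w \in V_{u_1} \cap V_{u_2}$ and argue by contradiction, assuming $\{v,u_1,u_2\} \notin B$. By definition of $V_{u_i}$ there are edges $e_i = \{u_i, w, z_i\} \in B$ with $z_i \notin N_1(v)$ (and $z_i \ne v$, as otherwise $w \in N_1(v)$). Since $u_i \in N_1(v)$ there are also edges $g_i = \{v, u_i, a_i\} \in B$, and here $a_i \ne w$ (else $w \in N_1(v)$) while $a_i \ne u_{3-i}$ precisely because no edge through $\{v,u_i\}$ can equal $\{v,u_1,u_2\}\notin B$. One then checks that $g_1, e_1, e_2, g_2$ are four distinct edges whose traces on the $4$-set $\{v, u_1, w, u_2\}$ realise the four edges of the cycle $v\,u_1\,w\,u_2$, producing a $C_4$ trace in $B \sub H$ and contradicting the hypothesis.

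The only delicate point in this last step is the distinctness and intersection bookkeeping: one must verify that the four chosen edges are pairwise distinct and that none of the auxiliary vertices $z_1,z_2,a_1,a_2$ collides with a cycle vertex, so that each $g_i$ meets $\{v,u_1,w,u_2\}$ in exactly $\{v,u_i\}$ and each $e_i$ in exactly $\{u_i,w\}$. All of these verifications reduce to the three placements $z_i \notin N_1(v) \cup \{v\}$, $w \notin N_1(v)$, and $\{v,u_1,u_2\}\notin B$, together with the fact that $g_1,g_2$ contain $v$ while $e_1,e_2$ do not; beyond this, the proof is only a rearrangement of the sum defining $\sum_{u} |V_u|$.
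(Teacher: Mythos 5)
Your proof is correct, and while it rests on the same engine as the paper's --- the four-edge configuration $g_1,e_1,e_2,g_2$ tracing a $C_4$ on $\{v,u_1,w,u_2\}$, plus Corollary~\ref{cor:7} to cap each pairwise intersection at $7$ --- the bookkeeping is genuinely different. The paper proves a dichotomy (Claim~\ref{deg1}): overlap of $V_u$ and $V_w$ forces $N_{G_v}(u)=\{w\}$ or $N_{G_v}(w)=\{u\}$, and then uses $\Delta(G_v)\le 2$ to carve out pairwise disjoint sets $V_u'=V_u\setminus V_w$ with $|V_u'|\ge |V_u|-7$, summing to $n+7|N_1(v)|\le n+14d(v)$. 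You instead prove the weaker (and cleaner) structural statement that overlap forces only $u_1u_2\in E(G_v)$, which suffices because your count routes through $\sum_w\binom{m(w)}{2}=\sum_{\{u_1,u_2\}}|V_{u_1}\cap V_{u_2}|$: only the $d(v)$ pairs spanning an edge of $G_v$ contribute, each at most $7$, giving $n+7d(v)$. Your version avoids the $\Delta(G_v)\le 2$ case analysis entirely and yields a slightly better constant (which is immaterial to Theorem~\ref{c4}). All the delicate placements you flag --- $a_i\ne w,u_{3-i}$ and $z_i\notin N_1(v)\cup\{v\}$ --- do check out from $w\in N_2(v)$, $e_i\cap N_1(v)=\{u_i\}$, and the assumption $\{v,u_1,u_2\}\notin B$, so the four traces are exactly the edges of the cycle and the edges are pairwise distinct.
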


\begin{proof}
Let $G_v:=\{xy: \{v,x,y\} \in B\}$ be the link graph of $v$. Because every pair has co-degree at most 2 in $B$, $\Delta(G_v) \leq 2$. 

\begin{claim}\label{deg1}Suppose that for some $u, w \in N_1(v)$, $V_u \cap V_w$ contains a vertex $x$. Then $B$ contains a $C_4$ trace on the vertices $v,u,x,w$ unless either $N_{G_v}(u) = \{w\}$ or $N_{G_v}(w) = \{u\}$. 
\end{claim}

\begin{proof}
Suppose that there exists edges $ua, wb \in E(G_v)$ such that $ua, wb \neq uw$. By the definition of $G_v$, $vua, vwb \in B$. Note that $a, b \neq x$, since $x \notin N_1(v)$. Let $e_u \in E_u$ and $e_w \in E_w$ be edges containing $x$. Then the edges $\{vua, e_u, e_w, vwb\}$ form a $C_4$ trace.
\end{proof}

We define the following sets $V_u' \subseteq V_u$ for $u \in N_1(v)$. If $d_{G_v}(u) = 2$, then $V_u' = V_u$. Otherwise if $N_{G_v}(u) = \{w\}$, set \[V_u' = V_u \setminus V_w.\] 
By Corollary~\ref{cor:7}, $|V_u'|\ge |V_u|-7$ for all $u$.  By Claim~\ref{deg1}, the $V_u'$ sets are pairwise disjoint from each other. Therefore $\sum_{u \in N_1(v)} |V_u'| \leq n$ and

\[\sum_{u \in N_1(v)} |V_u| \leq \sum_{u \in N_1(v)} (|V_u'| + 7) \leq n + 14 d(v),\]

where the last inequality comes from the fact that $|N_1(v)| \leq 2d(v)$.

\end{proof}

	By Corollary~\ref{2intersect} and Lemma~\ref{lem:Vi4}, we have \[\sum_{u\in N_1(v)} d(u)\le \sum_{u\in N_1(v)}(16 + |V_u|)\le 32d(v) +\sum_{u\in N_1(v)}|V_u| \leq n + 46d(v).\]
	
	Let $d=3e(H)/n$ denote the average degree of $H$.  We have \[d^2n \leq \sum_{u\in V(H)} d(u)^2 \leq \sum_{u\in V(H)}\sum_{v\in N_1(u)}d(u)= \sum_{v\in V(H)}\sum_{u\in N_1(v)} d(u)\le n^2 + 46dn.\]  Therefore $d \le \sqrt{n} + O(1)$, and hence $|B| =dn/3 \leq \frac{1}{3}n^{3/2} + O(n)$, as desired.  This completes the proof of the upper bound of Theorem~\ref{c4}.

\section{Concluding remarks}
It remains to determine the exact value of $ex(n, \Tr{C_4}{r})$, especially in the case where $r \geq 4$. The current best upper bound is that given by Theorem~\ref{thm:old}. In particular, we know
$\ex(n, \Tr{C_4}{r}) = \Theta(n^{3/2})$ for all $r$, but determining the limit (if it exists) $\lim_{n\to\infty} \ex(n, \Tr{C_4}{r})/n^{3/2}$ is likely difficult, though not as difficult as the more general $\ex(n,\Ber{C_4}{r})$ problem. For this problem, it is not even known if $\ex(n, \Ber{C_4}{r}) = \Theta (n^{3/2})$ for $r$ large.

{\bf Acknowledgement.} We thank Zolt\'an F\"uredi,  Abhishek Methuku, and Jacques Verstra\"ete for helpful comments.

\bibliographystyle{abbrv}
\bibliography{c4bib}

\end{document}